\title{Integral forms for tensor powers of the Virasoro vertex operator algebra
$L(\frac{1}{2},0)$ and their modules}
\author{Robert McRae}
\date{}
    \theoremstyle{definition}\newtheorem{rema}{Remark}[section]
    \theoremstyle{plain}\newtheorem{propo}[rema]{Proposition}
    \newtheorem{theo}[rema]{Theorem}
    \newtheorem{defi}[rema]{Definition}
    \newtheorem{lemma}[rema]{Lemma}
    \newtheorem{corol}[rema]{Corollary}
    \theoremstyle{definition}\newtheorem{exam}[rema]{Example}
\begin{document}
\bibliographystyle{alpha}
\maketitle

\newcommand{\Z}{\mathbb{Z}}
\numberwithin{equation}{section}

\begin{abstract}
\noindent We construct integral forms containing the conformal vector $\omega$ 
in certain tensor powers of the Virasoro vertex operator algebra 
$L(\frac{1}{2},0)$, and we construct integral forms in certain modules for these 
algebras. When a triple of modules for a tensor power of $L(\frac{1}{2},0)$ have 
integral forms, we classify which intertwining operators among these modules 
respect the integral forms. As an application, we explore how these results 
might be used to obtain integral forms in framed vertex operator algebras.
\end{abstract}

\section{Introduction}

The Virasoro vertex operator algebra $L(\frac{1}{2},0)$ is significant because 
many vertex operator algebras (called framed vertex operator algebras in 
\cite{DGH}) contain tensor powers of $L(\frac{1}{2},0)$ as vertex operator 
subalgebras. This class of framed vertex operator algebras most notably includes 
the lattice vertex operator algebra $V_{E_8}$ based on the $E_8$ root lattice and the moonshine module $V^\natural$ (\cite{B}, \cite{FLM2}), which contain $L(\frac{1}{2},0)^{\otimes 16}$ and $L(\frac{1}{2},0)^{\otimes 48}$, respectively (\cite{DMZ}). Thus, 
it is natural to look for integral forms in tensor powers of $L(\frac{1}{2},0)$ 
and its modules, and it is especially natural to look for integral forms which 
contain the conformal vector $\omega$. 

Such integral forms are worth studying in 
and of themselves as further examples of vertex algebras over $\Z$, beyond ones 
coming from lattice (\cite{B}, \cite{P}, \cite{DG}, \cite{M2}, \cite{M3}) and 
affine Lie algebra (\cite{GL}, \cite{M2}, \cite{M3}) vertex operator algebas. 
Integral forms in tensor powers of $L(\frac{1}{2},0)$ and their modules will also allow for the study of these vertex operator algebras over fields of prime characteristic. (The representation theory of $L(\frac{1}{2},0)$ itself over fields of odd prime characteristic has already been studied in \cite{DR1} and \cite{DR2}.) As another application, integral forms in tensor powers of $L(\frac{1}{2},0)$ and its modules 
may help construct interesting integral forms in framed vertex operator algebras, such as the moonshine module $V^\natural$. As indicated in the last section of this paper, if the irreducible modules appearing in the decomposition of a framed vertex operator algebra $V$ as an $L(\frac{1}{2},0)^{\otimes n}$-module have integral forms, and the vertex operator on $V$ induces intertwining operators among these modules which respect the integral forms, then $V$ has an integral form. The most substantial difficulty obstructing the implementation of this idea seems to be that the intertwining operators among the $L(\frac{1}{2},0)^{\otimes n}$-modules appearing in the decomposition of $V$ may in fact map the proposed integral form of $V$ into a different, larger, integral form.

At first glance, it might not seem natural to look for integral forms in 
$L(\frac{1}{2},0)$, since the Virasoro algebra commutation relations which 
underlie its vertex operator algebra structure,
\begin{equation*}
 [L(m), L(n)]=(m-n)L(m+n)+\dfrac{1}{4}\binom{m+1}{3}\delta_{m+n,0}
\end{equation*}
for $m,n\in\Z$, are not integral. In fact, we show that $L(\frac{1}{2},0)$ does 
have an integral form generated by $2\omega$, though it is impossible for an 
integral form of $L(\frac{1}{2},0)$ to contain $\omega$ itself. To find integral 
forms which do contain $\omega$, we must consider tensor powers 
$L(\frac{1}{2},0)^{\otimes n}$ where $n\in 4\Z$, because the Virasoro algebra 
commutation relations become integral when the central charge is an even 
integer.

The first main result in this paper is that when $n\in 4\Z$, 
$L(\frac{1}{2},0)^{\otimes n}$ indeed has integral forms whose generators (which 
include $\omega$) are indexed by the elements of binary linear codes which 
satisfy certain easily checked conditions. Moreover, we show that in some 
circumstances, $L(\frac{1}{2},0)$-modules whose conformal weights are integral 
have integral forms. In particular, we show that $L(\frac{1}{2},0)^{\otimes 16}$ 
has an integral form such that all irreducible submodules which appear in the 
decomposition of $V_{E_8}$ as an $L(\frac{1}{2},0)^{\otimes 16}$-module have 
integral forms. To prove these results, we heavily use the philosophy of \cite{M2} that integral forms are often best constructed and studied using generating sets; thus we do not need to find bases of the integral forms we construct in order to prove that they exist and contain desired elements such as $\omega$.

Recall that constructing modules is not enough in order to understand the 
representation theory of a vertex operator algebra $V$. Two important elements 
of the representation theory of $V$ are contragredient modules and intertwining 
operators; for example, these are needed for the construction of braided tensor 
categories of $V$-modules (see for example the review article \cite{HL}). Thus in 
this paper, we consider graded $\Z$-duals of and intertwining operators among 
the integral forms of $L(\frac{1}{2},0)^{\otimes n}$-modules that we construct. 
It is easy to use results in \cite{M2} to show that graded 
$\Z$-duals of the integral forms in this paper are integral forms in contragredient modules. The result for intertwining operators is more subtle, and we discuss it in more detail.

The second main result of this paper is a classification of intertwining operators among a triple $W^{(1)}$, $W^{(2)}$, $W^{(3)}$ of $L(\frac{1}{2},0)^{\otimes n}$-modules which respect certain integral forms in the modules. More particularly, suppose $\mathcal{C}$ is the binary linear code used to construct the integral form in $L(\frac{1}{2},0)^{\otimes n}$; we use $W^{(i)}_\mathcal{C}$ for $i=1,2,3$ to denote the corresponding integral forms in the modules $W^{(i)}$. Since $L(\frac{1}{2},0)^{\otimes n}$-modules are self-contragredient, the graded $\Z$-duals $(W^{(i)}_\mathcal{C})'$ are different integral forms of the modules $W^{(i)}$. Then we determine which intertwining operators $\mathcal{Y}$ satisfy
\begin{equation*}
 \mathcal{Y}: W^{(1)}_\mathcal{C}\otimes W^{(2)}_{\mathcal{C}}\rightarrow (W^{(3)}_\mathcal{C})'\lbrace x\rbrace,
\end{equation*}
or equivalently, which satisfy
\begin{equation*}
 \langle w_{(3)}, \mathcal{Y}(w_{(1)},x)w_{(2)}\rangle\in\Z\lbrace x\rbrace
\end{equation*}
for $w_{(i)}\in W^{(i)}_\mathcal{C}$. This last integrality conditions also arose in \cite{M3} in the study of intertwining operators among lattice and affine Lie algebra vertex operator algebras, and it is natural because it amounts to an integrality condition on physically relevant correlation functions in conformal field theory. It is not clear that there are generally any non-zero intertwining operators which satisfy
\begin{equation*}
 \mathcal{Y}: W^{(1)}_\mathcal{C}\otimes W^{(2)}_{\mathcal{C}}\rightarrow W^{(3)}_\mathcal{C}\lbrace x\rbrace,
\end{equation*}
and it is this subtlety which obstructs the construction of integral forms in framed vertex operator algebras using the results in this paper.

We also note that to classify integral intertwining operators among $L(\frac{1}{2},0)^{\otimes n}$-modules, we need to use ideas motivated by the cross-brackets of vertex operators introduced in \cite{FLM2} Section 8.9. In \cite{FLM2} it was shown that cross-brackets of vectors of conformal weight $2$ behave more naturally than for instance commutators do, which explains why related ideas are useful here in our study of integral forms generated by vectors of conformal weight $2$.

We now outline the contents of this paper. Sections 2, 3, and 4 are preliminary: 
Section 2 recalls definitions and results from \cite{M2} and \cite{M3} on 
integral forms of vertex operator algebras that we will need; Section 3 recalls 
the construction of tensor products of vertex operator algebras and their 
modules; and Section 4 recalls the construction of vertex operator algebras 
based on the Virasoro algebra, and also proves that the irreducible Virasoro 
algebra module $L(\ell, h)$ has a $\mathbb{Q}$-form when the central charge 
$\ell$ and lowest conformal weight $h$ are rational. Section 5 obtains integral 
forms in tensor powers $L(\frac{1}{2},0)^{\otimes n}$, which are based on binary 
linear codes and contain $\omega$ when $n\in 4\Z$; some examples of codes that 
give rise to these integral forms are also given. Section 6 obtains integral 
forms in certain $L(\frac{1}{2},0)^{\otimes n}$-modules which have integral 
conformal weights, and Section 7 treats graded $\Z$-duals of and integral 
intertwining operators among integral forms of $L(\frac{1}{2},0)^{\otimes 
n}$-modules. In Section 8, we suggest an approach to obtaining integral forms in 
framed vertex operator algebras using results from the previous sections.

In this paper, we assume some level of familiarity with the theory of vertex 
operator algebras, and so we will freely use standard vertex algebra notation 
and terminology throughout.

\paragraph{Acknowledgments}
This paper is part of my thesis \cite{M1}, completed at Rutgers University. I am 
very grateful to my advisor James Lepowsky for all of his advice and 
encouragement.

\section{Integral forms in vertex operator algebras and modules}

 We use the notions of vertex operator algebra and module for a vertex operator 
algebra as defined in \cite{FLM2}; see also \cite{LL}. We use the definition of an integral form in 
a vertex operator algebra and its modules from \cite{M2}:
\begin{defi}
 An \textit{integral form} of a vertex operator algebra $V$ is a vertex subring 
$V_\Z\subseteq V$ that is an integral form of $V$ as a vector space and that is 
compatible with the conformal weight grading of $V$:
 \begin{equation}\label{compatibility}
  V_\Z=\coprod_{n\in\mathbb{Z}} V_{(n)}\cap V_\Z,
 \end{equation}
where $V_{(n)}$ is the conformal weight space with $L(0)$-eigenvalue $n$. An 
\textit{integral form} in a $V$-module $W$ is a $V_\Z$-submodule $W_\Z\subseteq 
W$ that is an integral form of $W$ as a vector space and that is compatible with 
the conformal weight grading of $W$:
 \begin{equation}\label{modcompatibility}
  W_\mathbb{Z}=\coprod_{h\in\mathbb{C}} W_{(h)}\cap W_\Z,
 \end{equation}
where $W_{(h)}$ is the weight space with $L(0)$-eigenvalue $h$.
\end{defi}
\begin{rema}
 We could equivalently define an integral form $V_\Z$ of $V$ as the 
$\mathbb{Z}$-span of a basis for $V$ which contains the vacuum $\mathbf{1}$, is 
closed under vertex algebra products, and is compatible with the conformal 
weight gradation. Similarly, we could define an integral form $W_\Z$ of a 
$V$-module $W$ as the $\mathbb{Z}$--span of a basis for $W$ which is preserved 
by vertex operators from $V_\Z$ and is compatible with the conformal weight 
gradation. Note that $V$ may have more than one integral form, and the notion of 
integral form in a $V$-module $W$ depends on which integral form of $V$ we use.
\end{rema}
\begin{rema}
 The definition of an integral form $V_\Z$ of a vertex operator algebra $V$ 
given in \cite{DG} is somewhat different, since it replaces compatibility with 
the weight gradation with the requirement that $V_\Z$ contain an integral 
multiple of the conformal vector $\omega$. The examples studied in this paper will satisfy both conditions.
\end{rema}

As in \cite{M2} and \cite{M3}, we use generating sets to study integral forms in 
vertex operator algebras and modules. We recall that the vertex subalgebra of a 
vertex operator algebra $V$ generated by a subset $S$ is the smallest vertex 
subalgebra of $V$ containing $S$, and similarly the submodule of a $V$-module 
$W$ generated by a subset $T$ is the smallest $V$-submodule of $W$ containing 
$T$. These notions immediately carry over to vertex algebras over $\Z$ and their 
modules. We will need the following proposition from \cite{M2}:
\begin{propo}\label{zgen}
 Suppose $V$ is a vertex algebra; for a subset $S$ of $V$, denote by 
$\left\langle S\right\rangle_\mathbb{Z}$ the vertex subring
generated by $S$. Then  $\left\langle S\right\rangle_\mathbb{Z}$ is the 
$\mathbb{Z}$-span of coefficients of products of the form
\begin{equation}\label{zspan}
 Y(u_1,x_1)\ldots Y(u_k,x_k)\mathbf{1}
\end{equation}
where $u_1,\ldots u_k\in S$. Moreover, if $W$ is a $V$-module, the 
$\left\langle S\right\rangle_\mathbb{Z}$-submodule generated by a subset 
$T$ of $W$ is the $\mathbb{Z}$-span of coefficients of products of the 
form
\begin{equation}
 Y(u_1,x_1)\ldots Y(u_k,x_k)w
\end{equation}
where $u_1,\ldots u_k\in S$ and $w\in T$.
\end{propo}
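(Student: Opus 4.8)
The plan is to prove the statement for the vertex operator algebra first and then establish the module version by the same method. Write $M$ for the $\mathbb{Z}$-span of the coefficients of all products $Y(u_1,x_1)\cdots Y(u_k,x_k)\mathbf{1}$ with $k\geq 0$ and $u_1,\ldots,u_k\in S$, where the $k=0$ product is understood to be $\mathbf{1}$. First I would check the easy inclusion $M\subseteq\langle S\rangle_{\mathbb{Z}}$: since $\langle S\rangle_{\mathbb{Z}}$ is a vertex subring containing $S$ and $\mathbf{1}$, it is closed under all products $a_{(n)}b$, so working from the innermost factor outward, every coefficient of $Y(u_k,x_k)\mathbf{1}$ lies in $\langle S\rangle_{\mathbb{Z}}$, then so does every coefficient of $Y(u_{k-1},x_{k-1})Y(u_k,x_k)\mathbf{1}$, and so on.

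For the reverse inclusion I would show that $M$ is itself a vertex subring containing $S$; since $\langle S\rangle_{\mathbb{Z}}$ is by definition the smallest such subring, this gives $\langle S\rangle_{\mathbb{Z}}\subseteq M$. That $\mathbf{1}\in M$ is the $k=0$ case, and that $S\subseteq M$ follows from $Y(u,x)\mathbf{1}=e^{xL(-1)}u$, whose constant term is $u$. One also observes directly that for $u\in S$ the operator $u_{(p)}$ preserves $M$, because prepending $Y(u,x)$ to a generating product produces another generating product. The substantive point is that $M$ is closed under all products $a_{(n)}b$. By $\mathbb{Z}$-linearity it suffices to treat the case in which $a$ is a coefficient of $Y(u_1,x_1)\cdots Y(u_j,x_j)\mathbf{1}$ and $b\in M$, and I would induct on $j$. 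The case $j=0$ is immediate since $\mathbf{1}_{(n)}b=\delta_{n,-1}b$. For $j\geq 1$, extracting coefficients shows that $a=(u_1)_{(m)}c$ for some $m\in\mathbb{Z}$ and some coefficient $c$ of $Y(u_2,x_2)\cdots Y(u_j,x_j)\mathbf{1}$, to which the inductive hypothesis applies: $c_{(q)}$ preserves $M$ for all $q$, i.e. $Y(c,x)$ has all coefficients preserving $M$. Now the iterate formula (a standard consequence of the Jacobi identity, obtained by taking $\mathrm{Res}_{x_0}\,x_0^m$) expresses $Y((u_1)_{(m)}c,x)$ as a locally finite linear combination of the operators $(u_1)_{(p)}Y(c,x)$ and $Y(c,x)(u_1)_{(q)}$ times powers of $x$, and crucially every coefficient appearing is a binomial coefficient $\binom{m}{i}$, hence an integer. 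Therefore $a_{(n)}b$ is an integral combination of elements $(u_1)_{(p)}\big(c_{(q)}b\big)$ and $c_{(q)}\big((u_1)_{(p)}b\big)$, each of which lies in $M$ because $c_{(q)}$ and $(u_1)_{(p)}$ each preserve $M$. This completes the induction and hence the vertex operator algebra case.

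For the module statement I would let $N$ be the $\mathbb{Z}$-span of the coefficients of the products $Y(u_1,x_1)\cdots Y(u_k,x_k)w$ with $u_i\in S$ and $w\in T$, and argue in exactly the same pattern: $N$ is contained in the $\langle S\rangle_{\mathbb{Z}}$-submodule generated by $T$ because that submodule contains $T$ and is closed under $Y(u,x)$ for $u\in S$; conversely $N$ contains $T$ (the $k=0$ case), $u_{(p)}$ preserves $N$ for $u\in S$ by prepending $Y(u,x)$, and to see that $a_{(n)}$ preserves $N$ for every $a\in\langle S\rangle_{\mathbb{Z}}$ one uses the already-proved vertex operator algebra statement to reduce to the case in which $a$ is a coefficient of some $Y(u_1,x_1)\cdots Y(u_j,x_j)\mathbf{1}$, and then runs the same induction on $j$ via the iterate formula, with $w\in T$ playing the role formerly played by $\mathbf{1}$.

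I expect the main obstacle to be organizing the inductive argument for closure of $M$ (and $N$) under $a_{(n)}b$ cleanly --- in particular, peeling off the outermost factor $Y(u_1,x_1)$ correctly and invoking the iterate formula in a form that makes the integrality of every structure constant manifest. Everything else is either routine bookkeeping with (iterated) Laurent series or one of the two straightforward containments.
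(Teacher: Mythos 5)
Your proposal is correct; note that the paper does not prove this proposition at all but simply quotes it from \cite{M2}, and your argument --- the easy containment, then showing the $\mathbb{Z}$-span is itself a vertex subring by induction on the length of the product, peeling off the outermost factor and invoking the iterate formula $Y(u_{(m)}c,x)=\mathrm{Res}_{x_1}\left((x_1-x)^mY(u,x_1)Y(c,x)-(-x+x_1)^mY(c,x)Y(u,x_1)\right)$ whose only structure constants are the integers $\binom{m}{i}$, with the same scheme run for modules --- is essentially the standard proof given there. The only cosmetic point is that in a general vertex algebra one should write $Y(u,x)\mathbf{1}=e^{x\mathcal{D}}u$ (creation property) rather than $e^{xL(-1)}u$, which changes nothing in the argument.
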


We will use the notion of the contragredient $W'$ of a $V$-module $W$ from 
\cite{FHL}. If $V$ has an integral form $V_\Z$ and $W$ has an integral form 
$W_\Z$, then $W'$ has an integral form as a vector space, the \textit{graded 
$\Z$-dual}
\begin{equation*}
 W'_\Z=\lbrace w'\in W'\,\vert\,\langle w',w\rangle\in\Z\,\,\mathrm{for}\,\,w\in 
W_\Z\rbrace.
\end{equation*}
The following propositions from \cite{M2} (see also Lemma 6.1, Lemma 6.2, and 
Remark 6.3 in \cite{DG}) give sufficient conditions  for $W_\Z'$ to be a 
$V_\Z$-module:
\begin{propo}\label{contmod}
 Suppose $V_\mathbb{Z}$ is preserved by $\frac{L(1)^n}{n!}$ for $n\geq 0$.
Then $W'_\mathbb{Z}$ is preserved by the action of $V_\mathbb{Z}$.
\end{propo}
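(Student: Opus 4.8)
The plan is to derive the invariance of $W'_\mathbb{Z}$ under $V_\mathbb{Z}$ from the defining formula for the contragredient action, combined with the hypothesis on $\frac{L(1)^n}{n!}$. Recall from \cite{FHL} that for $v\in V$ the contragredient vertex operator is given by
\begin{equation*}
 \langle Y'(v,x)w',w\rangle=\langle w',Y(e^{xL(1)}(-x^{-2})^{L(0)}v,x^{-1})w\rangle
\end{equation*}
for $w'\in W'$ and $w\in W$. The right-hand side is a formal expression whose coefficients pair $w'$ against a finite $\mathbb{Z}$-linear combination (for each fixed power of $x$) of vectors of the form $v_n w$, where $v$ ranges over the coefficients of $e^{xL(1)}(-x^{-2})^{L(0)}v$. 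So the key point is simply to check that when $v\in V_\mathbb{Z}$, the vector $e^{xL(1)}(-x^{-2})^{L(0)}v$ lies in $V_\mathbb{Z}[x,x^{-1}]$ (equivalently, in $(V_\mathbb{Z})[x^{\pm 1}]$ after expanding), so that applying the integral vertex operators $Y(\cdot,x^{-1})$ to $w\in W_\mathbb{Z}$ keeps us in $W_\mathbb{Z}[[x,x^{-1}]]$, whence the pairing against $w'\in W'_\mathbb{Z}$ is integral.

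First I would record that $V_\mathbb{Z}$ is preserved by $(-x^{-2})^{L(0)}$: since $V_\mathbb{Z}$ is compatible with the conformal weight grading by \eqref{compatibility}, it decomposes as $\coprod_n V_{(n)}\cap V_\mathbb{Z}$, and on $V_{(n)}$ the operator $(-x^{-2})^{L(0)}$ acts as the scalar $(-1)^n x^{-2n}$; this sends $V_{(n)}\cap V_\mathbb{Z}$ into $(V_{(n)}\cap V_\mathbb{Z})[x^{\pm 2}]$, hence $V_\mathbb{Z}$ into $V_\mathbb{Z}[x^{\pm 1}]$. Next, $e^{xL(1)}=\sum_{n\geq 0}\frac{L(1)^n}{n!}x^n$ preserves $V_\mathbb{Z}[x^{\pm 1}]$ precisely because of the hypothesis that each $\frac{L(1)^n}{n!}$ preserves $V_\mathbb{Z}$ (and the sum is finite on each homogeneous vector since $L(1)$ lowers conformal weight). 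Composing, $e^{xL(1)}(-x^{-2})^{L(0)}v\in V_\mathbb{Z}[x^{\pm 1}]$ for $v\in V_\mathbb{Z}$.

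Finally I would assemble the pieces: write $e^{xL(1)}(-x^{-2})^{L(0)}v=\sum_i v^{(i)} x^{k_i}$ as a finite sum with $v^{(i)}\in V_\mathbb{Z}$ and $k_i\in\mathbb{Z}$; then
\begin{equation*}
 \langle Y'(v,x)w',w\rangle=\sum_i x^{k_i}\langle w', Y(v^{(i)},x^{-1})w\rangle.
\end{equation*}
Since $W_\mathbb{Z}$ is a $V_\mathbb{Z}$-module, each coefficient of $Y(v^{(i)},x^{-1})w$ (for $w\in W_\mathbb{Z}$) lies in $W_\mathbb{Z}$, and since $w'\in W'_\mathbb{Z}$ the pairing $\langle w',\cdot\rangle$ of that coefficient is an integer. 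Hence every coefficient of $\langle Y'(v,x)w',w\rangle$ is an integer, i.e. $Y'(v,x)w'$ has all its pairings with $W_\mathbb{Z}$ in $\mathbb{Z}[[x,x^{-1}]]$ (in fact in $\mathbb{Z}((x))$ by the truncation property), so $v_n w'\in W'_\mathbb{Z}$ for all $n$. This shows $W'_\mathbb{Z}$ is preserved by the action of $V_\mathbb{Z}$. The main obstacle is essentially bookkeeping: one must make sure the formal-variable manipulations (the substitution $x\mapsto x^{-1}$, the expansion of $e^{xL(1)}$, the lowering property of $L(1)$ guaranteeing finiteness) are all legitimate coefficient-wise, but no genuine difficulty arises beyond this — the hypothesis on $\frac{L(1)^n}{n!}$ was tailored exactly to make the argument go through, and compatibility with the grading handles the $(-x^{-2})^{L(0)}$ factor for free.
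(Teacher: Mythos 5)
Your argument is correct, and it is the standard one: the paper itself does not prove this proposition but imports it from \cite{M2} (see also Lemmas 6.1--6.2 of \cite{DG}), and the proof there is exactly your computation --- compatibility with the grading handles $(-x^{-2})^{L(0)}$, the hypothesis on $\frac{L(1)^n}{n!}$ handles $e^{xL(1)}$, and then the contragredient formula from \cite{FHL} together with the fact that $W_\mathbb{Z}$ is a $V_\mathbb{Z}$-module gives integrality of all pairings $\langle v_nw',w\rangle$ for $w\in W_\mathbb{Z}$, hence $v_nw'\in W'_\mathbb{Z}$. No gaps; your use of the implicit hypothesis that $W_\mathbb{Z}$ is preserved by $V_\mathbb{Z}$ is legitimate, since that is part of the definition of an integral form in a module.
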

\begin{propo}\label{invarunderl1}
 If $V_\mathbb{Z}$ is generated by vectors $v$ such that $L(1)v=0$, then 
$V_\mathbb{Z}$ is preserved by $\frac{L(1)^n}{n!}$ for $n\geq 0$.
\end{propo}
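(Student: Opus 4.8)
The plan is to combine Proposition~\ref{zgen} with the standard conjugation formula describing how the operator $e^{yL(1)}$, for a formal variable $y$, transforms vertex operators. First I would arrange for the generating set $S$ of $V_\Z$ to consist of \emph{homogeneous} vectors. Since $V_\Z$ is compatible with the conformal weight grading, every homogeneous component of an element of $V_\Z$ again lies in $V_\Z$; and if $L(1)v=0$ then, as $L(1)$ maps $V_{(m)}$ into $V_{(m-1)}$ and distinct weight spaces intersect trivially, each homogeneous component of $v$ is also annihilated by $L(1)$. Replacing every $v\in S$ by its homogeneous components therefore yields a new generating set of $V_\Z$ consisting of homogeneous vectors $u$, each satisfying $L(1)u=0$ and $L(0)u=h_u u$ with $h_u\in\Z$.

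By Proposition~\ref{zgen}, $V_\Z$ is the $\Z$-span of the coefficients, in the formal variables $x_1,\dots,x_k$, of the products $Y(u_1,x_1)\cdots Y(u_k,x_k)\mathbf{1}$ with $k\geq 0$ and $u_1,\dots,u_k\in S$ homogeneous and annihilated by $L(1)$. Since $\frac{L(1)^n}{n!}$ is $\Z$-linear, it suffices to show that $e^{yL(1)}$ carries each such product to a $\Z[x_1^{\pm1},\dots,x_k^{\pm1}][[y]]$-linear combination of vectors $(u_1)_{n_1}\cdots(u_k)_{n_k}\mathbf{1}$, all of which lie in $V_\Z$; extracting the coefficient of $y^n$, and then of each monomial in the $x_i$, then shows that $\frac{L(1)^n}{n!}$ maps each spanning coefficient of $V_\Z$ back into $V_\Z$. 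Using $e^{yL(1)}\mathbf{1}=\mathbf{1}$ and inserting $e^{-yL(1)}e^{yL(1)}=\mathrm{id}$ between consecutive factors, this reduces to understanding $e^{yL(1)}Y(u_i,x_i)e^{-yL(1)}$, for which I would invoke the standard formula
\begin{equation*}
 e^{yL(1)}Y(v,x)e^{-yL(1)}=Y\!\left(e^{y(1-yx)L(1)}(1-yx)^{-2L(0)}v,\ \tfrac{x}{1-yx}\right),
\end{equation*}
which for homogeneous $v$ with $L(1)v=0$ collapses to $(1-yx)^{-2h_v}\,Y\!\bigl(v,\tfrac{x}{1-yx}\bigr)$.

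It then remains to check integrality. The scalar $(1-yx_i)^{-2h_i}$ expands as a power series in $y$ with coefficients in $\Z[x_i]$ (using $2h_i\in\Z$; if $h_i<0$ it is even a polynomial). Writing $Y(u_i,z)=\sum_{n\in\Z}(u_i)_n z^{-n-1}$ and substituting $z=\tfrac{x_i}{1-yx_i}$, so that $z^{-n-1}=x_i^{-n-1}(1-yx_i)^{n+1}$, likewise yields for each $n\in\Z$ a power series in $y$ with coefficients in $\Z[x_i]$. Hence each conjugated vertex operator is, after expansion, a $\Z[x_i^{\pm1}][[y]]$-linear combination of the modes $(u_i)_n$, and multiplying the $k$ factors together and applying the result to $\mathbf{1}$ exhibits $e^{yL(1)}Y(u_1,x_1)\cdots Y(u_k,x_k)\mathbf{1}$ in the desired form. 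All of these formal manipulations are legitimate because $L(1)$ strictly lowers conformal weight and the weight spaces of $V$ vanish in sufficiently negative degree, so $e^{yL(1)}$ acts coefficientwise as a polynomial in $y$.

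The formal-series bookkeeping is routine; the one genuine ingredient is the conjugation formula for $e^{yL(1)}$, and the only step requiring real care is the reduction to homogeneous generators — a priori the vectors generating $V_\Z$ need not be homogeneous, and it is precisely the compatibility of the integral form with the conformal weight grading that makes this reduction valid.
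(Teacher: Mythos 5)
Your argument is correct, and it is essentially the argument behind this result: the paper itself gives no proof here, simply quoting the proposition from \cite{M2}, and the proof there runs along the same lines as yours, applying the standard $e^{yL(1)}$-conjugation formula from \cite{FHL} to the spanning set of Proposition \ref{zgen} and using that for a generator $u$ with $L(1)u=0$ the formula collapses to $(1-yx)^{-2\,\mathrm{wt}\,u}\,Y\bigl(u,\tfrac{x}{1-yx}\bigr)$, whose expansion has integer coefficients. Your preliminary reduction to homogeneous generators (using compatibility of $V_\Z$ with the weight grading and the fact that $L(1)$ shifts weights) is a legitimate and worthwhile point of care, and the remaining bookkeeping — $e^{yL(1)}\mathbf{1}=\mathbf{1}$, insertion of $e^{-yL(1)}e^{yL(1)}$ between factors, integrality of $\binom{-2h}{j}$ and of the expansion of $x^{-n-1}(1-yx)^{n+1}$, and finiteness of the coefficient extraction — is sound.
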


Sometimes a $V$-module $W$ is equivalent as a $V$-module to its contragredient 
$W'$; by Remark 5.3.3 in \cite{FHL} this happens exactly when $W$ has a 
nondegenerate bilinear form $(\cdot,\cdot)$ that is \textit{invariant}:
\begin{equation*}
 (Y(v,x)w',w)=(w',Y(e^{x L(1)} (-x^{-2})^{L(0)} v,x^{-1})w)
\end{equation*}
for all $v\in V$, $w,w'\in W$. In the special case $W=V$, we know  from 
\cite{Li} that the space of (not necessarily nondegenerate) invariant bilinear 
forms is linearly isomorphic to $V_{(0)}/L(1)V_{(1)}$. If $V$ is a simple vertex 
operator algebra, then any non-zero invariant bilinear form is necessarily 
nondegenerate.

 We will use the notion of intertwining operator among modules for a vertex 
operator algebra from \cite{FHL}. Suppose that $V$ is a vertex operator 
algebra with integral form $V_\Z$ and $W^{(i)}$ for $i=1,2,3$ are $V$-modules 
with integral forms $W^{(i)}_\Z$. If $\mathcal{Y}$ is an intertwining operator 
of type $\binom{W^{(3)}}{W^{(1)}\,W^{(2)}}$, recall from \cite{M3} that we 
say $\mathcal{Y}$ is \textit{integral} with respect to the integral forms 
$W^{(i)}_\Z$ if
 \begin{equation*}
  \mathcal{Y}(w_{(1)},x)w_{(2)}\in W^{(3)}_\Z\lbrace x\rbrace
 \end{equation*}
for all $w_{(1)}\in W^{(1)}_\Z$ and $w_{(2)}\in W^{(2)}_\Z$. We will need the 
following theorem from \cite{M3}, which states that to prove that an 
intertwining operator is integral, it is enough to check that it is integral on 
generators:
\begin{theo}\label{intwopgen}
 Suppose $V$ is a vertex operator algebra with integral form $V_\mathbb{Z}$ and 
$W^{(1)}$, $W^{(2)}$, and $W^{(3)}$ are $V$-modules with integral forms 
$W^{(1)}_\mathbb{Z}$, $W^{(2)}_\mathbb{Z}$, and $W^{(3)}_\mathbb{Z}$, 
respectively. Moreover, suppose $T^{(1)}$ and $T^{(2)}$ are generating sets for 
$W^{(1)}_\mathbb{Z}$ and $W^{(2)}_\mathbb{Z}$, respectively. If an intertwining 
operator $\mathcal{Y}$ of type 
$\binom{W^{(3)}}{W^{(1)}\, W^{(2)}}$ satisfies
 \begin{equation*}
  \mathcal{Y}(t_{(1)},x)t_{(2)}\in W^{(3)}_\mathbb{Z}\lbrace x\rbrace
 \end{equation*}
for all $t_{(1)}\in T^{(1)}$, $t_{(2)}\in T^{(2)}$, then $\mathcal{Y}$ is 
integral with respect to $W^{(1)}_\mathbb{Z}$, $W^{(2)}_\mathbb{Z}$, and 
$W^{(3)}_\mathbb{Z}$.
\end{theo}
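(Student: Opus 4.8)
\noindent The plan is to combine the generating-set description of Proposition~\ref{zgen} with the two standard consequences of the Jacobi identity for an intertwining operator --- the commutator formula and the iterate (associativity) formula --- so as to reduce everything to the hypothesis. By Proposition~\ref{zgen}, $W^{(1)}_\mathbb{Z}$ is the $\mathbb{Z}$-span of the coefficients of products $Y(v_1,x_1)\cdots Y(v_j,x_j)t_{(1)}$ with the $v_i$ in a weight-homogeneous generating set of $V_\mathbb{Z}$ and $t_{(1)}\in T^{(1)}$, and similarly for $W^{(2)}_\mathbb{Z}$ and $T^{(2)}$; so, using bilinearity of $\mathcal{Y}$, it is enough to prove $\mathcal{Y}(w_{(1)},x)w_{(2)}\in W^{(3)}_\mathbb{Z}\lbrace x\rbrace$ for $w_{(1)},w_{(2)}$ of this form. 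I would first record the two tools: the commutator formula expresses $[Y_{W^{(3)}}(v,x_1),\mathcal{Y}(w_{(1)},x_2)]$ as a finite linear combination, with integer coefficients and integer powers of the formal variables, of the operators $\mathcal{Y}(v_i w_{(1)},x_2)$ for $i\geq 0$; the iterate formula expresses $\mathcal{Y}(v_m w_{(1)},x_2)$ as a coefficientwise-finite, integer-coefficient combination of $W^{(3)}$-modes of $v$ composed with $\mathcal{Y}(w_{(1)},x_2)$ and of $\mathcal{Y}(w_{(1)},x_2)$ composed with $W^{(2)}$-modes of $v$. What makes these useful is that all the coefficients are integers and that $V_\mathbb{Z}$ acts on both $W^{(2)}_\mathbb{Z}$ and $W^{(3)}_\mathbb{Z}$, so every mode operator that occurs preserves the pertinent integral form.

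The first reduction removes the need to analyze $w_{(1)}$. Let $A$ be the set of $w_{(1)}\in W^{(1)}$ with $\mathcal{Y}(w_{(1)},x)w_{(2)}\in W^{(3)}_\mathbb{Z}\lbrace x\rbrace$ for \emph{every} $w_{(2)}\in W^{(2)}_\mathbb{Z}$. This is an additive subgroup, and the iterate formula shows it is stable under the $V_\mathbb{Z}$-action on $W^{(1)}$: for $w_{(1)}\in A$ and $v\in V_\mathbb{Z}$, the iterate formula writes $\mathcal{Y}(v_m w_{(1)},x)w_{(2)}$ as a combination of $W^{(3)}$-modes of $v$ applied to $\mathcal{Y}(w_{(1)},x)w_{(2)}$ --- integral, since $w_{(1)}\in A$ and $V_\mathbb{Z}$ preserves $W^{(3)}_\mathbb{Z}$ --- together with $\mathcal{Y}(w_{(1)},x)$ applied to $W^{(2)}$-modes of $v$ on $w_{(2)}$, which is just $\mathcal{Y}(w_{(1)},x)$ evaluated at another element of $W^{(2)}_\mathbb{Z}$ and hence again integral since $w_{(1)}\in A$. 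Thus $A$ is a $V_\mathbb{Z}$-submodule of $W^{(1)}$, and since $W^{(1)}_\mathbb{Z}$ is generated by $T^{(1)}$ as a $V_\mathbb{Z}$-module, it suffices to prove $T^{(1)}\subseteq A$, that is,
\begin{equation*}
 \mathcal{Y}(t_{(1)},x)w_{(2)}\in W^{(3)}_\mathbb{Z}\lbrace x\rbrace\qquad\text{for all }t_{(1)}\in T^{(1)}\text{ and }w_{(2)}\in W^{(2)}_\mathbb{Z}.
\end{equation*}

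This last statement will be the hard part. The natural attempt is to peel vertex operators off $w_{(2)}$ with the commutator formula, reducing to the hypothesis $\mathcal{Y}(t_{(1)},x)t_{(2)}\in W^{(3)}_\mathbb{Z}\lbrace x\rbrace$; but the commutator formula produces terms $\mathcal{Y}(v_l t_{(1)},x)$ in which a vertex operator has migrated onto the first argument, so $v_l t_{(1)}$ is no longer a generator, and peeling that operator back off via the iterate formula can push a vertex operator back onto $w_{(2)}$. Consequently a naive double induction on the two vertex-operator counts does not terminate; this is precisely the subtlety handled in \cite{M3}. I would resolve it with a single, carefully arranged induction whose complexity measure also records the conformal weights of $w_{(1)}$ and $w_{(2)}$ --- which are bounded below in the modules --- and which exploits the homogeneity of the chosen generators of $V_\mathbb{Z}$, so that the commutator and iterate formulas, applied in the right order, yield only terms of strictly smaller measure. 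I expect the set-up of this joint induction to be the only genuinely delicate point: once it is in place, each step is just one application of the commutator or iterate formula, together with the fact that $V_\mathbb{Z}$ preserves $W^{(2)}_\mathbb{Z}$ and $W^{(3)}_\mathbb{Z}$.
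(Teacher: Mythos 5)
Your first reduction is fine: the set $A$ of vectors $w_{(1)}$ that are integral against all of $W^{(2)}_\mathbb{Z}$ is an additive subgroup, and the iterate formula (with its integer binomial coefficients, plus the fact that $V_\mathbb{Z}$ preserves $W^{(2)}_\mathbb{Z}$ and $W^{(3)}_\mathbb{Z}$) shows $A$ is stable under all modes from $V_\mathbb{Z}$, so it suffices to treat $w_{(1)}=t_{(1)}\in T^{(1)}$. But the remaining step --- integrality of $\mathcal{Y}(t_{(1)},x)w_{(2)}$ for arbitrary $w_{(2)}\in W^{(2)}_\mathbb{Z}$ --- is the entire content of the theorem, and you do not prove it: you only assert that some ``carefully arranged induction'' with a weight-refined complexity measure will work. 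Note also that this theorem is quoted in the paper from \cite{M3} without proof, so there is no in-paper argument you are implicitly matching; you would have to supply the argument in full, not its skeleton.

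Worse, the specific two-step scheme you propose (strip a mode off $w_{(2)}$ by the commutator formula, then strip the resulting mode off the first argument by the iterate formula) cannot be rescued by any complexity measure, because it cancels identically rather than recursing. Writing $X_m=\mathcal{Y}(t_{(1)},x)u_m w'$ with $u\in V_\mathbb{Z}$, the commutator formula gives
\begin{equation*}
X_m=u_m\mathcal{Y}(t_{(1)},x)w'-\sum_{i=0}^{m}\binom{m}{i}x^{m-i}\,\mathcal{Y}(u_i t_{(1)},x)w',
\end{equation*}
and expanding each $\mathcal{Y}(u_i t_{(1)},x)w'$ by the iterate formula produces, besides terms handled by induction and $V_\mathbb{Z}$-invariance, the terms $\sum_{k=0}^{i}(-1)^{i+k+1}\binom{i}{k}x^{i-k}X_k$. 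Collecting coefficients, the factor multiplying $X_k$ is $\sum_{i=k}^{m}(-1)^{i+k}\binom{m}{i}\binom{i}{k}x^{m-k}=\binom{m}{k}\delta_{m,k}\,x^{m-k}$, so every $X_k$ with $k<m$ drops out and $X_m$ returns with coefficient exactly $1$: the identity collapses to a tautology. This is unsurprising, since both formulas are consequences of the same Jacobi identity; the obstruction is not termination bookkeeping but exact cancellation, so ``the right order'' of applying them does not exist. Some genuinely new input is needed --- compare the paper's own Theorem \ref{virintwop}, where even in the very special situation of lowest-weight generators annihilated by positive modes the author must use the cross-bracket identity of Lemma \ref{crossbracketslemma}, adjointness with respect to the invariant form, and the dual integral form $(W_{H^{(3)},\mathcal{C}})'$ to run an induction on weight. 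Your proposal stops exactly where the real work begins.
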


We conclude this section by recalling some results from \cite{M2} on the 
conformal vector $\omega$ in an integral form. If $c\in\mathbb{C}$ is the 
central charge of a vertex operator algebra $V$ with integral form $V_\Z$, we 
have:
\begin{propo}\label{omegainintform1}
 If $V_\mathbb{Z}$ contains $k\omega$ where $k\in\mathbb{C}$, then $k^2 c\in
2\mathbb{Z}$.
\end{propo}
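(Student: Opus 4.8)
The plan is to exploit the fact that an integral form $V_\Z$ is a vertex subring, hence closed under all $n$-th products $u_n v$ (Proposition~\ref{zgen}), by pairing $k\omega$ with itself so as to produce a scalar multiple of the vacuum $\mathbf 1$, and then to use a finiteness argument to force that scalar to be an integer.

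First I would record the key computation. Writing $Y(\omega,x)=\sum_{n\in\Z}L(n)x^{-n-2}$ as usual, one has $\omega_3\omega=L(2)\omega$, and since $\omega=L(-2)\mathbf 1$ with $L(0)\mathbf 1=L(2)\mathbf 1=0$, the Virasoro relation $[L(2),L(-2)]=4L(0)+\tfrac{c}{2}$ gives $L(2)\omega=\tfrac{c}{2}\mathbf 1$. Consequently, because $k\omega\in V_\Z$ and $V_\Z$ is closed under products, bilinearity of the product yields
\[
 (k\omega)_3(k\omega)=k^2\,\omega_3\omega=\frac{k^2 c}{2}\,\mathbf 1\in V_\Z,
\]
and by compatibility with the conformal weight grading \eqref{compatibility} this element lies in $V_{(0)}\cap V_\Z$.

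It then remains to show that any $\alpha\in\mathbb C$ with $\alpha\mathbf 1\in V_\Z$ is in fact an integer; applying this to $\alpha=\tfrac{k^2 c}{2}$ finishes the proof. I would argue this by considering $A=\{\alpha\in\mathbb C:\alpha\mathbf 1\in V_\Z\}$. Since $V_\Z$ contains $\mathbf 1$, is a $\Z$-module, and is closed under the $(-1)$st product (and $Y(\alpha\mathbf 1,x)=\alpha\,\mathrm{id}$ gives $(\alpha\mathbf 1)_{-1}(\beta\mathbf 1)=\alpha\beta\mathbf 1$), the set $A$ is a subring of $\mathbb C$ containing $\Z$. Moreover $A\subseteq\mathbb Q$: a $\Z$-basis of $V_\Z$ is a $\mathbb C$-basis of $V$, so expanding $\mathbf 1$ and $\alpha\mathbf 1$ in such a basis and comparing coefficients exhibits $\alpha$ as a ratio of integers. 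Finally $A\,\mathbf 1=\mathbb C\mathbf 1\cap V_\Z\subseteq V_{(0)}\cap V_\Z$, which is finitely generated as an abelian group because it is an integral form of the finite-dimensional space $V_{(0)}$; hence $A$ is a finitely generated abelian group, and a finitely generated subring of $\mathbb Q$ containing $1$ must be $\Z$. Therefore $\tfrac{k^2 c}{2}\in A=\Z$, i.e.\ $k^2 c\in 2\Z$.

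The Virasoro computation is routine; the step I expect to need the most care is the last one, where both the finite-dimensionality of $V_{(0)}$ and closure under the vertex algebra product are essential in order to rule out the scalar being merely rational (or merely an algebraic integer). I do not anticipate any genuinely serious obstacle beyond this.
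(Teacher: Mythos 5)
Your argument is correct and is essentially the intended one: the computation $(k\omega)_3(k\omega)=k^2L(2)L(-2)\mathbf{1}=\tfrac{k^2c}{2}\mathbf{1}\in V_\Z$, followed by the observation that $\lbrace\alpha\in\mathbb{C}\,\vert\,\alpha\mathbf{1}\in V_\Z\rbrace$ is a finitely generated subring of $\mathbb{Q}$ containing $1$ and hence equals $\Z$, is exactly the standard route (the paper only recalls Proposition \ref{omegainintform1} from \cite{M2} without reproducing the proof). The one step deserving the care you give it is the finite generation of $V_{(0)}\cap V_\Z$, which indeed follows from the finite-dimensionality of $V_{(0)}$ together with the fact that $\Z$-independent vectors in an integral form remain $\mathbb{C}$-independent.
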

\noindent As a partial converse of Proposition \ref{omegainintform1} for vertex 
operator algebras generated by $\omega$, we have
\begin{propo}\label{viralgprop}
 If $V$ is a vertex operator algebra generated by the conformal vector 
$\omega$, 
and $\omega$ is contained in a rational form of $V$, 
then $V$ has an integral form generated by $k\omega$ if $k\in\mathbb{Z}$ and 
$k^2 c\in 2\mathbb{Z}$. In particular, $\omega$ generates an integral form of 
$V$ 
if and only if $c\in 2\mathbb{Z}$.
\end{propo}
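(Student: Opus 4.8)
The plan is to show that the vertex subring $\langle k\omega\rangle_\Z$ generated by $k\omega$ over $\Z$, described explicitly by Proposition \ref{zgen}, is the desired integral form. Since $V$ is generated by $\omega$, it is spanned over $\mathbb{C}$ by the coefficients of products $Y(\omega,x_1)\cdots Y(\omega,x_j)\mathbf{1}$, i.e.\ by the vectors $L(n_1)\cdots L(n_j)\mathbf{1}$; the $\mathbb{Q}$-span of these vectors is a $\mathbb{Q}$-subalgebra of $V$ contained in the given rational form (which contains $\omega$ and $\mathbf{1}$ and is closed under products), and since every weight space of $V$ is finite-dimensional this $\mathbb{Q}$-subalgebra equals the whole rational form. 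Rescaling $\omega$ by $k$ only rescales these coefficients, so by Proposition \ref{zgen} the graded $\Z$-module $\langle k\omega\rangle_\Z$ lies in the rational form, is graded-compatible (each such coefficient is homogeneous), and contains $\mathbf{1}$. Granting that $\langle k\omega\rangle_\Z\cap V_{(n)}$ is finitely generated over $\Z$ for each $n$, it is then a finitely generated torsion-free $\Z$-module whose $\mathbb{Q}$-span is $V_{(n)}$, hence free of rank $\dim_\mathbb{C}V_{(n)}$ with a $\Z$-basis that is a $\mathbb{C}$-basis of $V_{(n)}$; this makes $\langle k\omega\rangle_\Z$ an integral form of $V$.

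The real content is the finite generation in each weight, and this is where $k\in\Z$ and $k^2c\in 2\Z$ enter. First I would record that $V$ is spanned by the images of the ``PBW'' monomials $L(-p_1)\cdots L(-p_l)\mathbf{1}$ with $p_1\geq\cdots\geq p_l\geq 2$, obtained by straightening $L(n_1)\cdots L(n_j)\mathbf{1}$ in the universal Virasoro vacuum module via the Virasoro relations and $L(m)\mathbf{1}=0$ for $m\geq -1$. The key point is to track the coefficients arising in this straightening: a single application of the Virasoro relations replaces a pair $L(a)L(b)$ by $L(b)L(a)$, plus the integer multiple $(a-b)L(a+b)$, plus the central term $\tfrac{c}{2}\binom{a+1}{3}\delta_{a+b,0}\in\tfrac{c}{2}\Z$; the non-central operations leave the number of Virasoro generators unchanged or decrease it by one, while each central term decreases it by two. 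Hence, writing $L(n_1)\cdots L(n_j)\mathbf{1}$ as a $\mathbb{C}$-combination of PBW monomials, the coefficient of a monomial with $l$ factors lies in $\sum_{t\leq (j-l)/2}\left(\tfrac{c}{2}\right)^{t}\Z$.

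Multiplying by the scalar $k^j$ coming from $Y(k\omega,\cdot)$ in place of $Y(\omega,\cdot)$, and using $t\leq (j-l)/2$, I would rewrite $k^j\left(\tfrac{c}{2}\right)^{t}=k^{\,j-l-2t}\left(\tfrac{k^2c}{2}\right)^{t}k^l$; since $j-l-2t\geq 0$, $k\in\Z$, and $\tfrac{k^2c}{2}\in\Z$, this lies in $k^l\Z$. Therefore every coefficient of every product $Y(k\omega,x_1)\cdots Y(k\omega,x_j)\mathbf{1}$ lies in the graded $\Z$-submodule spanned by the vectors $k^l L(-p_1)\cdots L(-p_l)\mathbf{1}$, which is finitely generated in each weight since there are only finitely many PBW monomials of each weight; this yields $\langle k\omega\rangle_\Z\subseteq$ that submodule and hence the required finite generation. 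The main obstacle is precisely this coefficient-tracking step: one must organize the straightening of $L(n_1)\cdots L(n_j)\mathbf{1}$ carefully enough to see simultaneously both the power $t$ of $\tfrac{c}{2}$ and the bound $t\leq (j-l)/2$. Finally, the ``in particular'' statement follows by taking $k=1$ in the first assertion, and conversely by applying Proposition \ref{omegainintform1} with $k=1$ to any integral form generated by $\omega$.
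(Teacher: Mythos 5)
The paper does not actually prove Proposition \ref{viralgprop}; it is recalled from \cite{M2}, so there is no in-paper proof to compare line by line. Judged on its own, your argument is correct, and it is essentially the same method the paper uses for Theorem \ref{goodform}: show the vertex subring generated by the weight-$2$ vector is graded and spans $V$ over $\mathbb{C}$, use containment in a rational form to see that $\Z$-independence forces $\mathbb{C}$-independence (bounding the rank), and get finite generation in each weight by straightening modes against the vacuum. Your hand bookkeeping of the power $t$ of $c/2$ and the exponent of $k$ can be compressed: the modes of $k\omega$ are $kL(n)$, and
\begin{equation*}
[kL(m),kL(n)]=k(m-n)\,kL(m+n)+\frac{k^{2}c}{2}\binom{m+1}{3}\delta_{m+n,0},
\end{equation*}
whose structure constants are integers precisely under your hypotheses $k\in\Z$, $k^{2}c\in 2\Z$; straightening with these relations (exactly as in \eqref{intformcomms}) immediately yields your spanning set $k^{l}L(-p_1)\cdots L(-p_l)\mathbf{1}$ with integer coefficients. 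Two minor points: you should exclude the degenerate case $k=0$, and you only need that $\langle k\omega\rangle_\Z$ lies inside the rational form, not your stronger (and unnecessary) claim that the $\mathbb{Q}$-span of the monomials equals the whole rational form.
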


\section{Tensor products of vertex operator algebras}
We now recall from \cite{FHL} the definition of the tensor product of vertex 
operator algebras $(U, 
Y_U,\mathbf{1}_U,\omega_U)$ and $(V,Y_V,\mathbf{1}_V,\omega_V)$. 
The tensor product vertex operator algebra is the vector space $U\otimes V$ 
with 
vertex operator given by
\begin{equation}\label{tensorprodop}
 Y_{U\otimes V} (u_{(1)}\otimes v_{(1)},x)(u_{(2)}\otimes 
v_{(2)})=Y_U(u_{(1)},x)u_{(2)}\otimes Y_V(v_{(1)},x)v_{(2)},
\end{equation}
vacuum given by
\begin{equation*}
 \mathbf{1}_{U\otimes V}=\mathbf{1}_U\otimes\mathbf{1}_V
\end{equation*}
and conformal vector given by
\begin{equation*}
 \omega_{U\otimes V}=\omega_U\otimes\mathbf{1}_V+\mathbf{1}_U\otimes\omega_V.
\end{equation*}
The central charge of $U\otimes V$ is the sum of the central charges of $U$ and 
$V$. If $W_U$ is a $U$-module and $W_V$ is a $V$-module, then $W_U\otimes W_V$ 
is a $U\otimes V$-module with vertex operator analogous to (\ref{tensorprodop}):
\begin{equation*}
 Y_{W_U\otimes W_V} (u\otimes v,x)(w_U\otimes 
w_V)=Y_{W_U}(u,x)w_U\otimes Y_{W_V}(v,x)w_V,
\end{equation*} 
We have the following result on generating sets for tensor product vertex 
operator algebras and modules:
\begin{propo}\label{tensorgens}
 If $U$ is generated by $S$ and $V$ is generated by $T$, then $U\otimes V$ 
is 
generated by
 \begin{equation*}
  \lbrace s\otimes\mathbf{1},\,\mathbf{1}\otimes t\,\vert\, s\in S,\,t\in 
T\rbrace.
 \end{equation*}
Moreover, if $W_U$ is a $U$-module generated by $Q$ and $W_V$ is a $V$-module 
generated by $R$, then $W_U\otimes W_V$ is generated as a $U\otimes V$-module by
\begin{equation*}
 \lbrace q\otimes r\,\vert\, q\in Q,\,r\in R\rbrace.
\end{equation*}
\end{propo}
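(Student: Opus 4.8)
The plan is to deduce both assertions from Proposition \ref{zgen}, which identifies the vertex subring generated by a set $S$ with the $\Z$-span of coefficients of the products $Y(u_1,x_1)\cdots Y(u_k,x_k)\mathbf 1$ with $u_i\in S$, and the submodule generated by a set $T$ with the $\Z$-span of coefficients of $Y(u_1,x_1)\cdots Y(u_k,x_k)w$ with $u_i\in S$ and $w\in T$; the same descriptions hold for subalgebras and submodules over a ground field, with $\Z$-spans replaced by spans over the field. Write $A=\lbrace s\otimes\mathbf 1,\ \mathbf 1\otimes t\mid s\in S,\ t\in T\rbrace$ and $B=\lbrace q\otimes r\mid q\in Q,\ r\in R\rbrace$. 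In each part the inclusions $\langle A\rangle\subseteq U\otimes V$ and $\langle B\rangle\subseteq W_U\otimes W_V$ are immediate, so it is only the reverse inclusions that require work.

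For the first assertion, Proposition \ref{zgen} presents $\langle A\rangle$ as the span of all coefficients of products $Y_{U\otimes V}(a_1,x_1)\cdots Y_{U\otimes V}(a_m,x_m)\mathbf 1_{U\otimes V}$ with $a_i\in A$. The key point is that, by \eqref{tensorprodop} together with $Y_U(\mathbf 1,x)=\mathrm{id}_U$ and $Y_V(\mathbf 1,x)=\mathrm{id}_V$,
\[
Y_{U\otimes V}(s\otimes\mathbf 1,x)=Y_U(s,x)\otimes\mathrm{id}_V,\qquad Y_{U\otimes V}(\mathbf 1\otimes t,x)=\mathrm{id}_U\otimes Y_V(t,x),
\]
so that operators of the first kind commute with operators of the second kind. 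Permuting factors, every such product can therefore be rewritten in the form
\[
\big(Y_U(s_1,x_1)\cdots Y_U(s_p,x_p)\mathbf 1_U\big)\otimes\big(Y_V(t_1,y_1)\cdots Y_V(t_\ell,y_\ell)\mathbf 1_V\big),\qquad s_i\in S,\ t_j\in T,
\]
and the two strings of generators and variables may be chosen independently of one another. By Proposition \ref{zgen} again, the coefficients of the left-hand tensor factor span $\langle S\rangle=U$ and those of the right-hand tensor factor span $\langle T\rangle=V$; since $M\otimes N$ is spanned by $\lbrace m_\alpha\otimes n_\beta\rbrace$ whenever $\lbrace m_\alpha\rbrace$ spans $M$ and $\lbrace n_\beta\rbrace$ spans $N$, it follows that the coefficients of these products span $U\otimes V$, and hence $\langle A\rangle=U\otimes V$.

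The second assertion goes the same way. Since $U\otimes V=\langle A\rangle$ by the first part, Proposition \ref{zgen} presents the $(U\otimes V)$-submodule of $W_U\otimes W_V$ generated by $B$ as the span of coefficients of products $Y_{W_U\otimes W_V}(a_1,x_1)\cdots Y_{W_U\otimes W_V}(a_m,x_m)(q\otimes r)$ with $a_i\in A$, $q\in Q$, and $r\in R$. Using $Y_{W_U\otimes W_V}(s\otimes\mathbf 1,x)=Y_{W_U}(s,x)\otimes\mathrm{id}_{W_V}$ and $Y_{W_U\otimes W_V}(\mathbf 1\otimes t,x)=\mathrm{id}_{W_U}\otimes Y_{W_V}(t,x)$, the same commutation-and-reordering step rewrites each such product as $\big(Y_{W_U}(s_1,x_1)\cdots Y_{W_U}(s_p,x_p)q\big)\otimes\big(Y_{W_V}(t_1,y_1)\cdots Y_{W_V}(t_\ell,y_\ell)r\big)$; by the module part of Proposition \ref{zgen}, the coefficients of the two tensor factors span the $U$-submodule of $W_U$ generated by $Q$, which is $W_U$, and the $V$-submodule of $W_V$ generated by $R$, which is $W_V$, so the coefficients of the products span $W_U\otimes W_V$, giving $\langle B\rangle=W_U\otimes W_V$. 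I expect the only step with any content to be the commutativity of the vertex operators coming from the two tensor factors, which is exactly what lets an iterated product be separated into a $U$-side (resp.\ $W_U$-side) product tensored with a $V$-side (resp.\ $W_V$-side) product; the remaining ingredients---the two trivial inclusions, the permutation of factors, and the passage from spanning sets of the factors to a spanning set of the tensor product---are routine.
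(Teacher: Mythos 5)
Your proof is correct, but it follows a different route from the paper's. The paper argues directly from the definition of the generated subalgebra: the subalgebra generated by $\lbrace s\otimes\mathbf{1},\,\mathbf{1}\otimes t\rbrace$ contains $U\otimes\mathbf{1}$ and $\mathbf{1}\otimes V$, and then a single application of the creation property, $\mathrm{Res}_x\,x^{-1}Y(u\otimes\mathbf{1},x)(\mathbf{1}\otimes v)=u_{-1}\mathbf{1}\otimes v=u\otimes v$, produces every pure tensor; for the module part it likewise applies operators $Y(u\otimes\mathbf{1},x)$ to $q\otimes r$ to reach $W_U\otimes r$ and then operators $Y(\mathbf{1}\otimes v,x)$ to reach all of $W_U\otimes W_V$. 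You instead invoke the spanning-by-coefficients description of Proposition \ref{zgen} (in its standard field-coefficient analogue, which the paper does not state but which is routine, e.g.\ from \cite{LL}) and use the fact that operators of the form $Y_U(s,x)\otimes\mathrm{id}_V$ and $\mathrm{id}_U\otimes Y_V(t,y)$ commute, so that every iterated product splits as a $U$-side product tensored with a $V$-side product; spanning of each factor then gives spanning of the tensor product. Both arguments are sound and short. The paper's version is more economical: it needs only the vacuum/creation axiom and never has to rewrite or reorder an arbitrary iterated product, nor to appeal to the coefficient-span description of generated subalgebras. Your version buys something too: it makes the key structural point (tensor-factor operators in distinct variables commute, so products separate) explicit, and it produces explicit spanning sets for $\langle A\rangle$ and $\langle B\rangle$ rather than just the equalities $\langle A\rangle=U\otimes V$ and $\langle B\rangle=W_U\otimes W_V$, which is in the spirit of how the paper later uses generating sets over $\Z$. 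The only mild caveat is that your argument rests on the field analogue of Proposition \ref{zgen}, an extra (standard) ingredient the paper's proof avoids.
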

\begin{proof}
 Since $U$ is generated by $S$ and $V$ is generated by $T$, the subalgebra of 
$U\otimes V$ generated by the elements $s\otimes\mathbf{1}$ and 
$\mathbf{1}\otimes t$ contains $U\otimes\mathbf{1}$ and $\mathbf{1}\otimes V$. 
Then this subalgebra must contain all of $U\otimes V$ because for any $u\in U$, 
$v\in V$, it contains
 \begin{equation*}
  \mathrm{Res}_x\, x^{-1} Y(u\otimes\mathbf{1},x)(\mathbf{1}\otimes 
v)=\mathrm{Res}_x\, x^{-1} Y(u,x)\mathbf{1}\otimes v=u\otimes v.
 \end{equation*}
Moreover, since $Q$ generates $W_U$ as a $U$-module, by applying vertex 
operators of the form $Y(u\otimes\mathbf{1},x)$ for $u\in U$ to vectors of the 
form $q\otimes r$, we see that the $U\otimes V$-submodule of $W_U\otimes W_V$ 
generated by the vectors $q\otimes r$ contains $W_U\otimes r$ for any $r\in R$. 
Then by applying vertex operators of the form $Y(\mathbf{1}\otimes v,x)$, we 
see 
that the submodule generated by the vectors $q\otimes r$ equals $W_U\otimes 
W_V$.
\end{proof}

 It is easy to see that if $U$ and $V$ are two vertex operator algebras with 
integral forms $U_\mathbb{Z}$ and $V_\mathbb{Z}$, respectively, then 
$U_\mathbb{Z}\otimes_\mathbb{Z} V_\mathbb{Z}$ is an integral form of $U\otimes 
V$. Moreover, if $W_U$ is a $U$-module and $W_V$ is a $V$-module with integral 
forms $(W_U)_\mathbb{Z}$ and $(W_V)_\mathbb{Z}$, respectively, then 
$(W_U)_\mathbb{Z}\otimes_\mathbb{Z} (W_V)_\mathbb{Z}$ is a 
$U_\mathbb{Z}\otimes_\mathbb{Z} V_\mathbb{Z}$-module. It is also clear that 
Proposition \ref{tensorgens} applies to vertex algebras over $\Z$ and their modules.

\begin{rema}
 All the definitions and results in this section have obvious generalizations 
to 
tensor products of more than two algebras or modules.
\end{rema}

\section{Vertex operator algebras based on the Virasoro algebra}
Now we recall the construction of vertex operator algebras based on the 
Virasoro 
algebra (see for instance \cite{FZ} or \cite{LL} Section 6.1 for more details). 
Recall the Virasoro Lie algebra
\begin{equation*}
 \mathcal{L}=\coprod_{n\in\mathbb{Z}}\mathbb{C} L_n\oplus\mathbb{C}\mathbf{c}
\end{equation*}
with $\mathbf{c}$ central and all other commutation relations given by
\begin{equation}\label{viralgcomm}
 [L_m, L_n]=(m-n)L_{m+n}+\frac{m^3-m}{12}\delta_{m+n,0}\mathbf{c}
\end{equation}
for any $m,n\in\mathbb{Z}$. The Virasoro algebra has the decomposition into 
subalgebras
\begin{equation*}
 \mathcal{L}=\mathcal{L}_+\oplus\mathcal{L}_0\oplus\mathcal{L}_-,
\end{equation*}
where
\begin{equation*}
 \mathcal{L}_\pm=\coprod_{n\in\mp\mathbb{Z}_+}\mathbb{C} L_n
\end{equation*}
and
\begin{equation*}
 \mathcal{L}_0=\mathbb{C} L_0\oplus\mathbb{C}\mathbf{c}.
\end{equation*}
We also define the subalgebra
\begin{equation*}
 \mathcal{L}_{\leq 1} =\mathcal{L}_-\oplus\mathcal{L}_0\oplus\mathbb{C} L_{-1}.
\end{equation*}
For any $\mathcal{L}$-module $V$, we use $L(n)$ to denote the action of $L_n$ 
on 
$V$.

Now for any complex number $\ell$, we have the one-dimensional 
$\mathcal{L}_{\leq 1}$-module $\mathbb{C}_\ell$ on which $\mathcal{L}_-$, 
$L_0$, 
and $L_{-1}$ act trivially and on which $\mathbf{c}$ acts as the scalar $\ell$. 
Then we form the induced module
\begin{equation*}
 V(\ell,0)=U(\mathcal{L})\otimes_{U(\mathcal{L}_{\leq 1})}\mathbb{C}_\ell,
\end{equation*}
which is a vertex operator algebra with vacuum $\mathbf{1}=1\otimes 1$ and 
generated by its conformal vector $\omega =L(-2)\mathbf{1}$ with vertex operator
\begin{equation*}
 Y(L(-2)\mathbf{1},x)=\sum_{n\in\mathbb{Z}} L(n) x^{-n-2}.
\end{equation*}

To construct irreducible $V(\ell,0)$-modules, we take a complex number $h$ 
and consider the one-dimensional $\mathcal{L}_-\oplus\mathcal{L}_0$-module 
$\mathbb{C}_{\ell, h}$ on which $\mathcal{L}_-$ acts trivially, $\mathbf{c}$ 
acts as the scalar $\ell$ and $L_0$ acts as the scalar $h$. Then we form the 
Verma module
\begin{equation*} 
M(\ell,h)=U(\mathcal{L})\otimes_{U(\mathcal{L}_-\oplus\mathcal{L}_0)}\mathbb{C}_
{\ell,h},
\end{equation*}
which is a $V(\ell,0)$-module. For any $h\in\mathbb{C}$, $M(\ell,h)$ has a 
unique irreducible quotient $L(\ell,h)$, and these modules $L(\ell,h)$ exhaust 
the irreducible $V(\ell,0)$-modules up to equivalence. 

Note that $V(\ell,0)$ itself is a quotient of $M(\ell,0)$ by the submodule 
generated by $L(-1)\mathbf{1}$. It is often the case that $V(\ell,0)$ is 
irreducible as a module for itself and is thus equal to $L(\ell,0)$. In this 
case, the irreducible $L(\ell,0)$-modules consist of all $L(\ell,h)$. From 
\cite{W}, $V(\ell,0)$ is reducible if and only if
\begin{equation*}
 \ell=c_{p,q}=1-\dfrac{6(p-q)^2}{pq}
\end{equation*}
where $p$ and $q$ are relatively prime integers greater than $1$. In this case, 
the irreducible $L(c_{p,q},0)$-modules are the modules $L(c_{p,q},h_{m,n})$ 
where
\begin{equation*}
 h_{m,n}=\frac{(np-mq)^2-(p-q)^2}{4pq}
\end{equation*}
for $0<m<p$ and $0<n<q$.

Taking $p=3$ and $q=4$, we obtain $c_{3,4}=\frac{1}{2}$, and $L(\frac{1}{2},0)$ 
has the three irreducible modules $L(\frac{1}{2},0)$, 
$L(\frac{1}{2},\frac{1}{2})$, and $L(\frac{1}{2},\frac{1}{16})$. Moreover, for 
any positive integer $n$, the vertex operator algebra 
$L(\frac{1}{2},0)^{\otimes 
n}$ is simple, and all its irreducible modules are obtained as tensor products 
of irreducible modules for $L(\frac{1}{2},0)$ (\cite{FHL}).

We conclude this section by showing the existence of $\mathbb{Q}$-forms inside 
the irreducible $\mathcal{L}$-modules $L(\ell,h)$ for $\ell,h\in\mathbb{Q}$, 
using an argument analogous to the one used to obtain $\Z$-forms in irreducible 
modules for finite-dimensional complex simple Lie algebras (Lemma 12 in 
\cite{S}; see also Theorem 27.1 in \cite{Hu}):
\begin{propo}\label{virmodqform}
 Suppose $\ell,h\in\mathbb{Q}$ and $v_h$ spans the lowest conformal weight 
space 
of $L(\ell,h)$. Then $L(\ell,h)$ has a $\mathbb{Q}$-form $L(\ell,h)_\mathbb{Q}$ 
which is the $\mathbb{Q}$-span of vectors of the form
 \begin{equation}\label{virmodspan}
  L(-n_1)\cdots L(-n_k) v_h
 \end{equation}
where $n_i>0$.
\end{propo}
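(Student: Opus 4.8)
The plan is to mimic the standard construction of Chevalley $\mathbb{Z}$-forms in irreducible highest weight modules for simple Lie algebras, adapting it to the Virasoro setting. First I would verify that the $\mathbb{Q}$-span $L(\ell,h)_\mathbb{Q}$ of the vectors in \eqref{virmodspan} is a $\mathbb{Q}$-form of $L(\ell,h)$ as a vector space: it is a $\mathbb{Q}$-subspace, and since $L(\ell,h)$ is spanned over $\mathbb{C}$ by such vectors (the module being generated by $v_h$ under $\mathcal{L}_-$, with $L_0$ and $\mathbf{c}$ acting as the rational scalars $h$ and $\ell$, and $L_n v_h = 0$ for $n > 0$), it suffices to see that a $\mathbb{C}$-basis of $L(\ell,h)$ can be chosen from among these vectors — equivalently, that the natural map $\mathbb{C} \otimes_\mathbb{Q} L(\ell,h)_\mathbb{Q} \to L(\ell,h)$ is an isomorphism. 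This follows because one can fix a PBW-ordered spanning set $L(-n_1)\cdots L(-n_k)v_h$ with $n_1 \geq \cdots \geq n_k > 0$ and observe that any $\mathbb{C}$-linear dependence among a finite subset of these, after being put in reduced form, would already hold over $\mathbb{Q}$ (the coefficients defining the submodule structure of $L(\ell,h)$ inside $M(\ell,h)$ are rational, since $\ell, h \in \mathbb{Q}$ and the Shapovalov-type determinants and null vectors are rational).

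The key remaining point is that $L(\ell,h)_\mathbb{Q}$ is stable under the action of every $L(n)$, $n \in \mathbb{Z}$. For $n < 0$ this is immediate from the definition of the spanning set. For $n \geq 0$ I would argue by induction on the conformal weight. Given a spanning vector $w = L(-n_1)\cdots L(-n_k)v_h$ of weight $h + (n_1 + \cdots + n_k)$, I compute $L(n)w$ by moving $L(n)$ to the right past each $L(-n_i)$ using the Virasoro relations \eqref{viralgcomm}, namely $[L_m, L_n] = (m-n)L_{m+n} + \frac{m^3 - m}{12}\delta_{m+n,0}\mathbf{c}$. Since $\frac{m^3 - m}{12} \in \frac{1}{12}\mathbb{Z} \subseteq \mathbb{Q}$ and $\mathbf{c}$ acts by $\ell \in \mathbb{Q}$, and since $(m-n) \in \mathbb{Z}$, each commutator term that arises has rational coefficients; meanwhile $L_0$ acting on a weight vector contributes the rational eigenvalue, and $L(n)v_h = 0$ for $n > 0$ while $L(0)v_h = h v_h$. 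Each resulting term is either zero, a rational multiple of a lower-or-equal weight spanning vector, or a rational multiple of $L(-n'_1)\cdots L(-n'_j)v_h$ of strictly smaller length $j < k$; in all cases it lies in $L(\ell,h)_\mathbb{Q}$, possibly invoking the induction hypothesis for the strictly lower weight spaces. Hence $L(n) L(\ell,h)_\mathbb{Q} \subseteq L(\ell,h)_\mathbb{Q}$.

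Finally, $L(\ell,h)_\mathbb{Q}$ is automatically compatible with the conformal weight grading because each spanning vector in \eqref{virmodspan} is an $L(0)$-eigenvector of rational — indeed, of the form $h + (\text{nonnegative integer})$ — eigenvalue, so $L(\ell,h)_\mathbb{Q} = \coprod_{h'} (L(\ell,h)_{(h')} \cap L(\ell,h)_\mathbb{Q})$. Putting these together, $L(\ell,h)_\mathbb{Q}$ is an $\mathcal{L}$-stable, grading-compatible $\mathbb{Q}$-form, which is exactly the claimed $\mathbb{Q}$-form as a module. The main obstacle I anticipate is the vector-space claim, i.e. checking carefully that passing to the irreducible quotient $L(\ell,h)$ of $M(\ell,h)$ does not destroy rationality of a basis — this requires knowing that the maximal proper submodule of $M(\ell,h)$ is defined over $\mathbb{Q}$, which in turn rests on the rationality of the Shapovalov form (its matrix entries being polynomials in $\ell$ and $h$ with rational, in fact $\frac{1}{12}\mathbb{Z}$-adjacent, coefficients). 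The module-stability argument, by contrast, is a routine though slightly tedious bookkeeping of Virasoro brackets.
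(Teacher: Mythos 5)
Your proof is correct in outline, but at the decisive step it takes a genuinely different route from the paper. You reduce the vector-space claim to rationality of the maximal proper submodule $J(\ell,h)\subseteq M(\ell,h)$, obtained from the contravariant (Shapovalov) form: its Gram matrices in a PBW basis have entries that are polynomials in $\ell$ and $h$ with rational coefficients, so for $\ell,h\in\mathbb{Q}$ its radical is defined over $\mathbb{Q}$ weight space by weight space, and the image of the obvious $\mathbb{Q}$-form of $M(\ell,h)$ is then a $\mathbb{Q}$-form of $L(\ell,h)$. To make this airtight you should phrase it as: the space of relations among the images of the PBW monomials (i.e.\ each weight space of $J(\ell,h)$) is defined over $\mathbb{Q}$, so the $\mathbb{Q}$-dimension of each weight space of your span equals its $\mathbb{C}$-dimension; your statement that a complex dependence ``in reduced form already holds over $\mathbb{Q}$'' is not literally meaningful as written, and you should also record the standard fact that the radical of the contravariant form is exactly the maximal proper submodule. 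The paper never passes through the Verma module or the Shapovalov form: it sets $L(\ell,h)_\mathbb{Q}=U_\mathbb{Q}(\mathcal{L})\cdot v_h=U_\mathbb{Q}(\mathcal{L}_+)\cdot v_h$, which disposes of $\mathcal{L}$-stability at once (rather than by your bracket bookkeeping), and then proves $\mathbb{Q}$-independence implies $\mathbb{C}$-independence by a minimal counterexample: if $\lbrace w_i\rbrace_{i=1}^k$ were $\mathbb{Q}$-independent but $\mathbb{C}$-dependent with $k$ minimal, irreducibility supplies $y\in U_\mathbb{Q}(\mathcal{L}_-)$ with $(y\cdot w_1)_h\neq 0$, and since $L(\ell,h)_\mathbb{Q}$ is graded with $L(\ell,h)_\mathbb{Q}\cap\mathbb{C}v_h=\mathbb{Q}v_h$ the lowest-weight components give rational scalars $q_i$ and a shorter $\mathbb{Q}$-independent, $\mathbb{C}$-dependent set $\lbrace q_1w_i-q_iw_1\rbrace_{i=2}^k$, a contradiction. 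This is precisely the Steinberg/Humphreys argument you name as your model but do not actually follow. The trade-off: your approach is more structural (it yields rationality of $J(\ell,h)$ itself and would apply to any quotient defined over $\mathbb{Q}$, irreducible or not) at the cost of importing facts about the contravariant form, while the paper's argument is self-contained, using only irreducibility of $L(\ell,h)$ and rationality of the Virasoro structure constants.
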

\begin{proof}
 Let $U_\mathbb{Q}(\mathcal{L})$ denote the $\mathbb{Q}$-subalgebra of 
$U(\mathcal{L})$ spanned by monomials in the basis elements $L_n$ for 
$n\in\mathbb{Z}$ and $\mathbf{c}$. Similarly, let 
$U_\mathbb{Q}(\mathcal{L}_\pm)$ denote the $\mathbb{Q}$-subalgebras spanned by 
monomials in the basis elements $L_{\mp n}$ for $n>0$, and let 
$U_\mathbb{Q}(\mathcal{L}_0)$ denote the $\mathbb{Q}$-subalgebra spanned by 
monomials in $L_0$ and $\mathbf{c}$. Since the Lie algebra structure constants for the 
basis 
$\lbrace L_n\rbrace_{n\in\mathbb{Z}}\cup\lbrace\mathbf{c}\rbrace$ given by 
(\ref{viralgcomm}) are rational, choosing an appropriate order on the basis and 
the Poincar\'{e}-Birkhoff-Witt Theorem show that $U_\mathbb{Q}(\mathcal{L})$ is 
a $\mathbb{Q}$-form of the associative algebra $U(\mathcal{L})$ and
 \begin{equation*}  
U_{\mathbb{Q}}(\mathcal{L})=U_\mathbb{Q}(\mathcal{L}_+)U_\mathbb{Q}(\mathcal{L}
_0)U_\mathbb{Q}(\mathcal{L}_-).
 \end{equation*}

 Now we define the $U_\mathbb{Q}(\mathcal{L})$-module
\begin{equation*}
 L(\ell,h)_\mathbb{Q}=U_\mathbb{Q}(\mathcal{L})\cdot 
v_h=U_\mathbb{Q}(\mathcal{L}_+)\cdot v_h
\end{equation*}
where the second equality follows because $L(n)v_h=0$ for $n>0$, 
$L(0)v_h=hv_h\in\mathbb{Q} v_h$, and $\mathbf{c}\cdot v_h=\ell v_h\in\mathbb{Q} 
v_h$. Note that $L(\ell,h)_\mathbb{Q}$ is thus the $\mathbb{Q}$-span of 
vectors of the form (\ref{virmodspan}), which means that the $\mathbb{C}$-span 
of $L(\ell,h)_\mathbb{Q}$ is all of $L(\ell,h)$. Note also from \eqref{virmodspan} that the intersection 
of $L(\ell, h)_\mathbb{Q}$ with the lowest conformal weight space 
$\mathbb{C}v_h$ of $L(\ell,h)$ is $\mathbb{Q}v_h$ and that $L(\ell, h)_\mathbb{Q}$ is graded by conformal weight. For any $w\in L(\ell,h)$, we 
use $w_h$ to denote the component of $w$ in the lowest conformal weight space 
of 
$L(\ell,h)$.

Since $L(\ell,h)_\mathbb{Q}$ is the $\mathbb{Q}$-span of a spanning set for 
$L(\ell,h)$ over $\mathbb{C}$, to show that $L(\ell,h)_\mathbb{Q}$ is a 
$\mathbb{Q}$-form of $L(\ell,h)$, we just need to show that any set of vectors 
in $L(\ell,h)_\mathbb{Q}$ that is linearly independent over $\mathbb{Q}$ is 
also 
linearly independent over $\mathbb{C}$. To prove this, suppose to the contrary 
that $\lbrace w_i\rbrace_{i=1}^k\subseteq L(\ell,h)_\mathbb{Q}$ is a set of 
(non-zero) vectors of minimal cardinality which are linearly independent over 
$\mathbb{Q}$ but linearly dependent over $\mathbb{C}$ (note that $k\geq 2$). 
This means there is some dependence relation
\begin{equation*}
 \sum_{i=1}^k c_i w_i=0
\end{equation*}
where $c_i\in\mathbb{C}^\times$.

Now, there is some $y\in U_\mathbb{Q}(\mathcal{L}_-)$ such that $(y\cdot 
w_1)_h\neq 0$, because otherwise $w_1$ generates a proper 
$\mathcal{L}$-submodule of $L(\ell,h)$, contradicting the irreducibility of 
$L(\ell,h)$. Note that for $1\leq i\leq k$, $(y\cdot w_i)_h=q_i v_h$ where 
$q_i\in\mathbb{Q}$ and $q_1\neq 0$, since $y$ preserves $L(\ell,h)_\mathbb{Q}$, $L(\ell,h)_\mathbb{Q}$ is graded by conformal weight,
and $L(\ell,h)_\mathbb{Q}\cap\mathbb{C} v_h=\mathbb{Q} v_h$. Thus
\begin{equation*}
 0=\left(y\cdot\sum_{i=1}^k c_i w_i\right)_h=\sum_{i=1}^k c_i (y\cdot 
w_i)_h=\left(\sum_{i=1}^k c_i q_i\right) v_h,
\end{equation*}
or $\sum_{i=1}^k c_i q_i=0$. Then
\begin{equation*}
 0=q_1\sum_{i=1}^k c_i w_i-\left(\sum_{i=1}^k c_i q_i\right) w_1=\sum_{i=1}^k 
(c_i q_1 w_i-c_i q_i w_1)=\sum_{i=2}^k c_i(q_1 w_i-q_i w_1).
\end{equation*}
Since $q_1\neq 0$ and the vectors $\lbrace w_i\rbrace_{i=1}^k$ are linearly 
independent over $\mathbb{Q}$, the vectors $\lbrace q_1 w_i-q_i 
w_1\rbrace_{i=2}^k$ are also linearly indepedent over $\mathbb{Q}$. But since 
they are also dependent over $\mathbb{C}$, this contradicts the minimality of 
$\lbrace w_i\rbrace_{i=1}^k$.
\end{proof}

\section{Integral forms in $L(\frac{1}{2},0)^{\otimes n}$}
In this section we construct integral forms in tensor powers of the Virasoro 
vertex operator algebra $L(\frac{1}{2},0)$ using generators. We start with the 
following basic result:
\begin{propo}\label{badform}
 The vertex operator algebra $L(\frac{1}{2},0)$ has an integral form generated 
by $2\omega$.
\end{propo}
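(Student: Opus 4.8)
The plan is to obtain this as a direct application of Proposition \ref{viralgprop} to $V = L(\frac{1}{2},0)$, whose central charge is $c = \frac{1}{2}$, with $k = 2$. Three things need to be checked. First, $L(\frac{1}{2},0)$ is generated by its conformal vector $\omega = L(-2)\mathbf{1}$: indeed $V(\frac{1}{2},0)$ is generated by $\omega$, as recalled in Section 4, and $L(\frac{1}{2},0)$ is a quotient of $V(\frac{1}{2},0)$. Second, $\omega$ is contained in a rational form of $L(\frac{1}{2},0)$: applying Proposition \ref{virmodqform} with $\ell = \frac{1}{2}$, $h = 0$, and $v_0 = \mathbf{1}$ produces the $\mathbb{Q}$-form $L(\frac{1}{2},0)_{\mathbb{Q}}$ spanned by the vectors $L(-n_1)\cdots L(-n_k)\mathbf{1}$, and this contains $\omega = L(-2)\mathbf{1}$. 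Third, $k = 2 \in \mathbb{Z}$ and $k^2 c = 4\cdot\frac{1}{2} = 2 \in 2\mathbb{Z}$. Proposition \ref{viralgprop} then gives an integral form of $L(\frac{1}{2},0)$ generated by $2\omega$.

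It is worth recording why $2\omega$, and not $\omega$ itself, is used. Writing $\widehat{L}(n)$ for the coefficient of $x^{-n-2}$ in $Y(2\omega,x)$, so that $\widehat{L}(n) = 2L(n)$, the Virasoro relations at central charge $\frac{1}{2}$ become
\[
 [\widehat{L}(m),\widehat{L}(n)] = 2(m-n)\widehat{L}(m+n) + \binom{m+1}{3}\delta_{m+n,0},
\]
which have integer coefficients; together with Proposition \ref{zgen}, this is the structural reason the vertex subring generated by $2\omega$ behaves well. In the other direction, Proposition \ref{omegainintform1} shows that no integral form of $L(\frac{1}{2},0)$ can contain $\omega$ itself, since $1^2\cdot\frac{1}{2} \notin 2\mathbb{Z}$; in fact among positive integers $k$ the smallest one for which $k\omega$ can lie in an integral form is $k = 2$.

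The step requiring the most care is the second hypothesis of Proposition \ref{viralgprop}: one must know that the $\mathbb{Q}$-span in Proposition \ref{virmodqform}, which is built as a rational form only for the action of the Virasoro Lie algebra, is genuinely closed under all vertex algebra products and hence is a rational form of the vertex operator algebra $L(\frac{1}{2},0)$. This holds because, in a vertex operator algebra generated by $\omega$, every vertex operator $Y(u,x)$ with $u = L(-n_1)\cdots L(-n_k)\mathbf{1}$ expands with rational coefficients as a sum of iterates of the operators $L(n)$; for example $L(-n)\mathbf{1} = \frac{1}{(n-2)!}L(-1)^{n-2}\omega$ for $n \geq 2$, so applying such vertex operators to the spanning vectors stays inside the $\mathbb{Q}$-span. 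Alternatively, one can simply invoke the proof of Proposition \ref{viralgprop} in \cite{M2}, into which this observation is already incorporated.
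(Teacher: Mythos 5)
Your proof is correct and follows the same route as the paper: verify that $\omega$ generates $L(\frac{1}{2},0)$ and lies in the $\mathbb{Q}$-form of Proposition \ref{virmodqform}, then apply Proposition \ref{viralgprop} with $k=2$, $c=\frac{1}{2}$. The extra remarks on the rescaled Virasoro relations and on why $k=2$ is minimal are accurate but not needed beyond what the cited propositions already provide.
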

\begin{proof}
 Since $\omega=L(-2)\mathbf{1}$ is contained in the $\mathbb{Q}$-form 
$L(\frac{1}{2},0)_\mathbb{Q}$ given by Proposition \ref{virmodqform} and 
$\omega$ generates $L(\frac{1}{2},0)$ as a vertex operator algebra, Proposition 
\ref{viralgprop} immediately implies that $L(\frac{1}{2},0)$ has an integral 
form generated by $2\omega$. 
\end{proof}

Now for any positive integer $n$, we consider $L(\frac{1}{2},0)^{\otimes n}$; 
for any $i$ such that $1\leq i\leq n$, we set
\begin{equation*}
 \omega^{(i)}=\mathbf{1}^{\otimes (i-1)}\otimes\omega\otimes\mathbf{1}^{\otimes 
(n-i)}.
\end{equation*}
Then as an immediate consequence of Propositions \ref{tensorgens} and 
\ref{badform}, we have
\begin{corol}
 For any integer $n\geq 1$, $L(\frac{1}{2},0)^{\otimes n}$ has an integral form 
generated by the vectors $2\omega^{(i)}$ for $1\leq i\leq n$.
\end{corol}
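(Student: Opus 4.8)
The plan is to chain together the two immediately preceding results: Proposition \ref{badform}, which gives an integral form of $L(\frac{1}{2},0)$ generated by $2\omega$, and Proposition \ref{tensorgens}, which tells us how generating sets behave under tensor products. First I would invoke Proposition \ref{badform} to fix, for each tensor factor, the integral form $L(\frac{1}{2},0)_\Z$ generated by $2\omega$; by the remark following Proposition \ref{tensorgens}, the $n$-fold tensor product $L(\frac{1}{2},0)_\Z^{\otimes n}$ (over $\Z$) is an integral form of $L(\frac{1}{2},0)^{\otimes n}$, since tensor products over $\Z$ of integral forms are integral forms, and this is compatible with the conformal weight grading because each factor is.

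Next I would identify the generators. Writing $S_i = \{2\omega\}$ for the generating set of the $i$-th tensor factor and applying the (multi-factor version of the) generating-set statement in Proposition \ref{tensorgens}, the tensor product integral form is generated over $\Z$ by the vectors
\begin{equation*}
 \mathbf{1}^{\otimes(i-1)}\otimes 2\omega\otimes\mathbf{1}^{\otimes(n-i)} = 2\omega^{(i)}
\end{equation*}
for $1\leq i\leq n$. Here I should be a little careful that Proposition \ref{tensorgens} is stated for two factors, but the Remark at the end of Section 3 explicitly asserts the obvious generalization to more than two factors, and the Remark after Proposition \ref{tensorgens} in the integral-forms discussion notes that the statement applies to vertex algebras over $\Z$; so both extensions I need are already granted by the excerpt. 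One could alternatively induct on $n$, applying the two-factor Proposition \ref{tensorgens} with $U = L(\frac{1}{2},0)^{\otimes(n-1)}$ and $V = L(\frac{1}{2},0)$, but citing the stated generalizations is cleaner.

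There is essentially no obstacle here: the corollary is a formal consequence of results already in hand. The only point that warrants a sentence of care is matching the notion of "generated by" in Proposition \ref{zgen}/\ref{tensorgens} — generation as a vertex subring over $\Z$, i.e. the $\Z$-span of coefficients of iterated products $Y(u_1,x_1)\cdots Y(u_k,x_k)\mathbf{1}$ — with the claim that this $\Z$-span really is all of $L(\frac{1}{2},0)_\Z^{\otimes n}$ and not something smaller; but this is exactly what Proposition \ref{badform} asserts for each factor, and Proposition \ref{tensorgens} then promotes it to the tensor product. So the proof is just: by Proposition \ref{badform} each factor $L(\frac{1}{2},0)$ has the integral form $\langle 2\omega\rangle_\Z$; by Proposition \ref{tensorgens} (and the Remark extending it to $n$ factors and to $\Z$-forms) the tensor product integral form $\langle 2\omega\rangle_\Z^{\otimes n}$ is generated by the vectors $2\omega^{(i)}$, $1\leq i\leq n$; and this tensor product of integral forms is an integral form of $L(\frac{1}{2},0)^{\otimes n}$ by the observations in Section 3.
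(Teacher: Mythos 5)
Your proof is correct and follows exactly the paper's route: the paper states the corollary as an immediate consequence of Propositions \ref{badform} and \ref{tensorgens}, together with the observations in Section 3 that tensor products over $\Z$ of integral forms are integral forms and that Proposition \ref{tensorgens} carries over to vertex algebras over $\Z$ and to more than two factors. Your added care about matching the notions of generation is fine but not a point of divergence.
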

Although the preceding corollary gives integral forms in 
$L(\frac{1}{2},0)^{\otimes n}$ for any $n$, these forms do not contain 
$\omega$. 
We would like to construct integral forms in $L(\frac{1}{2},0)^{\otimes n}$ 
which do contain $\omega$; from Proposition \ref{omegainintform1}, this will 
not 
be possible unless $n\in 4\mathbb{Z}$, since the central charge of 
$L(\frac{1}{2},0)^{\otimes n}$ is $\frac{n}{2}$. To facilitate the construction 
of integral forms containing $\omega$ in $L(\frac{1}{2},0)^{\otimes n}$, we 
recall the definition of a binary linear code (see for example \cite{MS}, but 
we 
use the notation of \cite{FLM2} Chapter 10).

Let $\Omega$ be an $n$-element set. Then the power set $P(\Omega)$ is a vector 
space over $\mathbb{F}_2$, the field of two elements, where addition is given 
by 
symmetric difference: for $S,T\subseteq\Omega$,
\begin{equation*}
 S+T=(S\setminus(S\cap T))\cup (T\setminus (S\cap T)).
\end{equation*}
A \textit{binary linear code} $\mathcal{C}$ is an $\mathbb{F}_2$-subspace of 
$P(\Omega)$. One example of a binary linear code is the subspace
\begin{equation*}
 \mathcal{E}(\Omega)=\lbrace S\subseteq\Omega\,\vert\, \vert S\vert\in 
2\mathbb{Z}\rbrace
\end{equation*}
of subsets of $\Omega$ of even cardinality. 

The $\mathbb{F}_2$-vector space $P(\Omega)$ has a nondegenerate 
$\mathbb{F}_2$-valued bilinear form given by
\begin{equation*}
 (S,T)\mapsto \vert S\cap T\vert +2\mathbb{Z}
\end{equation*}
for $S,T\subseteq\Omega$. Given a code $\mathcal{C}\subseteq P(\Omega)$, the 
\textit{dual code} $\mathcal{C}^\perp$ is given by
\begin{equation*}
 \mathcal{C}^\perp =\lbrace S\subseteq\Omega\,\vert\,\vert S\cap T\vert\in 
2\mathbb{Z}\,\,\mathrm{for}\,\,\mathrm{any}\,\,T\in\mathcal{C}\rbrace.
\end{equation*}
A code $\mathcal{C}$ is self-dual if $\mathcal{C}=\mathcal{C}^\perp$. We say 
that a code $\mathcal{C}$ is a Type II binary linear code if 
$\vert\Omega\vert\in 4\mathbb{Z}$, $\Omega\in\mathcal{C}$, and $\vert T\vert\in 
4\mathbb{Z}$ for any $T\in\mathcal{C}$. Note that if $\Omega\in\mathcal{C}$, 
then $\mathcal{C}$ is closed under complements.

We now take $n\geq 1$ and consider $L(\frac{1}{2},0)^{\otimes n}$, and we 
identify $\Omega =\lbrace 1,\ldots,n\rbrace$. Consider a code 
$\mathcal{C}\subseteq P(\Omega)$ and for any $T\in\mathcal{C}$, we define 
$\omega_T\in L(\frac{1}{2},0)^{\otimes n}$ by
\begin{equation*}
 \omega_T=\sum_{i\notin T}\omega^{(i)}-\sum_{i\in T}\omega^{(i)}.
\end{equation*}
The main result of this section is:
\begin{theo}\label{goodform}
 Suppose $n\in 4\mathbb{Z}$, $\Omega =\lbrace 1,\ldots,n\rbrace$, and 
$\mathcal{C}\subseteq \mathcal{E}(\Omega)$ is a binary linear code satisfying 
$\Omega\in\mathcal{C}$ and for any distinct $i,j\in\Omega$ there is some 
$T_{ij}\in\mathcal{C}$ such that $i\in T_{ij}$ and $j\notin T_{ij}$. Then the 
vertex subring $L(\frac{1}{2},0)^{\otimes n}_\mathcal{C}$ generated by the 
vectors $\omega_T$ for $T\in\mathcal{C}$ is an integral form of 
$L(\frac{1}{2},0)^{\otimes n}$ containing $\omega$.
\end{theo}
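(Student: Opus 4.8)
The plan is to realize $L(\tfrac{1}{2},0)^{\otimes n}_\mathcal{C}$ as the image of the universal enveloping algebra of an explicit $\mathbb{Z}$-Lie ring acting on $L(\tfrac{1}{2},0)^{\otimes n}$, and to derive both finiteness and integrality from the structure constants of that ring. The assertion that $\omega\in L(\tfrac{1}{2},0)^{\otimes n}_\mathcal{C}$ is immediate: since $\mathcal{C}$ is an $\mathbb{F}_2$-subspace it contains $\emptyset$, and $\omega_\emptyset=\sum_i\omega^{(i)}=\omega$ is one of the generators. So the real content is that the vertex subring generated by the $\omega_T$ is an integral form as a vector space, compatible with the conformal weight grading. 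Throughout write $\epsilon_i^T=1$ for $i\notin T$ and $\epsilon_i^T=-1$ for $i\in T$, so that $\omega_T=\sum_i\epsilon_i^T\omega^{(i)}$, and set $\ell_T(m)=\sum_i\epsilon_i^T L^{(i)}(m)$, where $L^{(i)}(m)$ is the Virasoro operator acting on the $i$-th tensor factor; then $(\omega_T)_{m+1}=\ell_T(m)$, and by Proposition \ref{zgen} the subring $L(\tfrac{1}{2},0)^{\otimes n}_\mathcal{C}$ is the $\mathbb{Z}$-span of the vectors $\ell_{T_1}(m_1)\cdots\ell_{T_k}(m_k)\mathbf{1}$ with $T_j\in\mathcal{C}$ and $m_j\in\mathbb{Z}$.

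The key computation is the commutator
\begin{equation*}
 [\ell_S(m),\ell_T(m')]=(m-m')\,\ell_{S\triangle T}(m+m')+\frac{m^3-m}{24}\bigl(n-2\lvert S\triangle T\rvert\bigr)\delta_{m+m',0}\cdot\mathrm{id},
\end{equation*}
which follows from the Virasoro relations \eqref{viralgcomm} (with central charge $\tfrac{1}{2}$) together with $\epsilon_i^S\epsilon_i^T=\epsilon_i^{S\triangle T}$ and $\sum_i\epsilon_i^{S\triangle T}=n-2\lvert S\triangle T\rvert$. Here $S\triangle T\in\mathcal{C}$ because $\mathcal{C}$ is linear, and the scalar multiplying $\mathrm{id}$ is an integer precisely because $n\in4\mathbb{Z}$ and, since $\mathcal{C}\subseteq\mathcal{E}(\Omega)$, $\lvert S\triangle T\rvert$ is even; thus $n-2\lvert S\triangle T\rvert\in4\mathbb{Z}$ and $\tfrac{m^3-m}{24}\cdot4=\tfrac{m^3-m}{6}\in\mathbb{Z}$. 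Consequently the $\mathbb{Z}$-span $\mathfrak{L}_\mathcal{C}$ of $\{\ell_T(m):T\in\mathcal{C},\,m\in\mathbb{Z}\}\cup\{\mathrm{id}\}$ is a Lie subring of $\mathrm{End}(L(\tfrac{1}{2},0)^{\otimes n})$ that is free and graded as a $\mathbb{Z}$-module, finitely generated in each degree (grading $\ell_T(m)$ by $-m$). By the Poincar\'{e}--Birkhoff--Witt theorem over $\mathbb{Z}$, its enveloping algebra $U_\mathbb{Z}(\mathfrak{L}_\mathcal{C})$ is a free $\mathbb{Z}$-module with an ordered-monomial basis in a homogeneous $\mathbb{Z}$-basis of $\mathfrak{L}_\mathcal{C}$, and $L(\tfrac{1}{2},0)^{\otimes n}_\mathcal{C}=U_\mathbb{Z}(\mathfrak{L}_\mathcal{C})\cdot\mathbf{1}$. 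Since $\ell_T(m)\mathbf{1}=0$ for $m\geq-1$ (using $L^{(i)}(-1)\mathbf{1}=0$ in $L(\tfrac{1}{2},0)$), ordering the basis so that the modes with $m\geq-1$ come last shows that $L(\tfrac{1}{2},0)^{\otimes n}_\mathcal{C}$ is spanned by ordered products of the $\ell_T(m)$ with $m\leq-2$ applied to $\mathbf{1}$; in a fixed conformal weight $d$ only finitely many such products are nonzero, so $L(\tfrac{1}{2},0)^{\otimes n}_\mathcal{C}$ is finitely generated, hence free, in each conformal weight, and since all its spanning vectors are weight-homogeneous it is compatible with the grading.

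It remains to see that $L(\tfrac{1}{2},0)^{\otimes n}_\mathcal{C}$ has full rank, equivalently that it spans $L(\tfrac{1}{2},0)^{\otimes n}$ over $\mathbb{C}$; combined with the previous paragraph this gives that a $\mathbb{Z}$-basis in each weight is a $\mathbb{C}$-basis. By Proposition \ref{tensorgens} and the fact that $\omega$ generates $L(\tfrac{1}{2},0)$, the vectors $\omega^{(i)}$ generate $L(\tfrac{1}{2},0)^{\otimes n}$ as a vertex operator algebra, so it is enough to show each $\omega^{(i)}$ lies in $L(\tfrac{1}{2},0)^{\otimes n}_\mathcal{C}\otimes_\mathbb{Z}\mathbb{Q}$. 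Since $(\omega_S)_1\omega_T=2\omega_{S\triangle T}$, this rational span contains $\mathrm{span}_\mathbb{Q}\{\omega_T:T\in\mathcal{C}\}$; identifying $\omega_T$ with the vector $\mathbf{1}-2\chi_T\in\mathbb{Q}^n$ (for $\chi_T$ the indicator of $T$, and $\mathbf{1}=\omega_\emptyset$) this span equals $\mathbb{Q}\mathbf{1}+\mathrm{span}_\mathbb{Q}\{\chi_T:T\in\mathcal{C}\}$, and a short argument shows the separating hypothesis forces $\mathrm{span}_\mathbb{Q}\{\chi_T:T\in\mathcal{C}\}=\mathbb{Q}^n$: the separating condition makes the column patterns $i\mapsto(\chi_T(i))_{T\in\mathcal{C}}$ pairwise distinct and nonzero, and a Fourier-analytic computation on $\mathbb{F}_2^r$ (where $r=\dim_{\mathbb{F}_2}\mathcal{C}$) then shows any $a\in\mathbb{Q}^n$ annihilating all $\chi_T$ must vanish. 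Hence $L(\tfrac{1}{2},0)^{\otimes n}_\mathcal{C}\otimes_\mathbb{Z}\mathbb{Q}$ is a $\mathbb{Q}$-form of $L(\tfrac{1}{2},0)^{\otimes n}$ — it is contained in the $\mathbb{Q}$-form $L(\tfrac{1}{2},0)_\mathbb{Q}^{\otimes n}$ of Proposition \ref{virmodqform}, since the generators $\omega_T$ are, and it spans it — which together with the finite generation and grading compatibility established above proves that $L(\tfrac{1}{2},0)^{\otimes n}_\mathcal{C}$ is an integral form of $L(\tfrac{1}{2},0)^{\otimes n}$ containing $\omega=\omega_\emptyset$.

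The step I expect to be the main obstacle is the discreteness, i.e. ruling out unbounded denominators, which is what forces the argument of the second paragraph. The naive estimate — each $\omega_T$ lies in $\tfrac{1}{2}M$ for the known integral form $M$ generated by the $2\omega^{(i)}$, so a product of $k$ generators lies in $\tfrac{1}{2^k}M$ — is worthless, because for a fixed conformal weight the number $k$ of generators appearing is unbounded. The Lie-ring/PBW packaging is precisely what repairs this: the integrality of the Virasoro central terms for the special combinations $\omega_T$ — which is exactly where the hypotheses $n\in4\mathbb{Z}$ and $\mathcal{C}\subseteq\mathcal{E}(\Omega)$ are used — is what makes $\mathfrak{L}_\mathcal{C}$ closed under bracket, and once that is in place PBW over $\mathbb{Z}$ yields finite generation in each weight, hence discreteness, without any need to exhibit a basis of the integral form itself.
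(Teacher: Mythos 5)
Your proposal is correct, and its skeleton is essentially the paper's: grading compatibility from homogeneity of the $\omega_T$ together with Proposition \ref{zgen}; the commutator computation for $[L_S(m),L_T(m')]$ --- identical to \eqref{intformcomms}, with integrality of the central term coming exactly from $n\in 4\Z$ and $\mathcal{C}\subseteq\mathcal{E}(\Omega)$ --- used to straighten monomials so that each weight space of $L(\frac{1}{2},0)^{\otimes n}_\mathcal{C}$ is spanned by finitely many vectors; containment in the $\mathbb{Q}$-form of Proposition \ref{virmodqform} to bound ranks; and full $\mathbb{C}$-span by showing the $\omega^{(i)}$ lie in the rational span of the generators. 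Your Lie-ring/PBW packaging of the straightening is only cosmetically different from the paper's direct use of the commutation relations (and PBW is not really needed: bracket closure alone gives the reordering). The one genuinely different ingredient is the spanning step: the paper proves the explicit identity $\frac{\vert\mathcal{C}\vert}{2}\omega^{(i)}=\sum_{T\in\mathcal{C},\,i\notin T}\omega_T$ by a counting argument with the dual code $\mathcal{C}_{ij}^\perp$ (using $\Omega\in\mathcal{C}$ for closure under complements), whereas you argue nonconstructively that the indicators $\chi_T$, $T\in\mathcal{C}$, span $\mathbb{Q}^n$, via linear independence of the pairwise distinct, nontrivial characters $T\mapsto(-1)^{\vert\{i\}\cap T\vert}$ of $\mathcal{C}\cong\mathbb{F}_2^r$ (distinctness from the separating hypothesis, nontriviality from $\Omega\in\mathcal{C}$); both use the same hypotheses, but the paper's version exhibits a concrete integral multiple of $\omega^{(i)}$ inside the form, while yours gives only membership in the $\mathbb{Q}$-span, which suffices. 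One small point to tighten: the inclusion $L(\frac{1}{2},0)^{\otimes n}_\mathcal{C}\subseteq L(\frac{1}{2},0)^{\otimes n}_\mathbb{Q}$ does not follow merely from the generators $\omega_T$ lying in the $\mathbb{Q}$-form; it follows because your form is spanned by monomials in the modes $L^{(i)}(m)$ applied to $\mathbf{1}$ and the $\mathbb{Q}$-form is stable under those modes --- a justification your mode-monomial spanning description already supplies.
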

\begin{proof}
 It is clear that $\omega\in L(\frac{1}{2},0)^{\otimes n}_\mathcal{C}$ since 
$\omega=\omega_{\emptyset}$. Since the vectors $\omega^{(i)}$ are homogeneous 
of 
conformal weight $2$, so are the vectors $\omega_T$ for $T\in\mathcal{C}$; then 
the standard fact that for homogeneous vectors $u$ and $v$ in a vertex operator 
algebra,
\begin{equation}\label{homops}
 \mathrm{wt}\,u_n v =\mathrm{wt}\,u+\mathrm{wt}\,v-n-1
\end{equation}
for $n\in\mathbb{Z}$, together with Proposition \ref{zgen}, implies that 
$L(\frac{1}{2},0)^{\otimes n}_\mathcal{C}$ is compatible 
with the conformal weight gradation of $L(\frac{1}{2},0)^{\otimes n}$ in the 
sense of (\ref{compatibility}).

We will show that the $\mathbb{C}$-span of 
$L(\frac{1}{2},0)^{\otimes n}_\mathcal{C}$ equals $L(\frac{1}{2},0)^{\otimes 
n}$ 
by showing that $L(\frac{1}{2},0)^{\otimes n}_\mathcal{C}$ contains non-zero multiples 
of 
the generators $\omega^{(i)}$ of $L(\frac{1}{2},0)^{\otimes n}$. By 
compatibility, this will show 
that the intersection of $L(\frac{1}{2},0)^{\otimes n}_\mathcal{C}$ with each 
weight space $L(\frac{1}{2},0)^{\otimes n}_{(m)}$ for $m\in\mathbb{Z}$ spans 
$L(\frac{1}{2},0)^{\otimes n}_{(m)}$. Then it will suffice to show that for any 
$m\in\mathbb{Z}$, $L(\frac{1}{2},0)^{\otimes n}_\mathcal{C}\cap 
L(\frac{1}{2},0)^{\otimes n}_{(m)}$ is a finitely generated abelian group, that 
is, a lattice, and that this lattice has rank equal to the dimension of 
$L(\frac{1}{2},0)^{\otimes n}_{(m)}$.

In fact, such a lattice must have rank at 
least the dimension of $L(\frac{1}{2},0)^{\otimes n}_{(m)}$ because 
$L(\frac{1}{2},0)^{\otimes n}_\mathcal{C}\cap 
L(\frac{1}{2},0)^{\otimes n}_{(m)}$ spans $L(\frac{1}{2},0)^{\otimes n}_{(m)}$ 
over $\mathbb{C}$. But also $L(\frac{1}{2},0)^{\otimes 
n}_\mathcal{C}\subseteq L(\frac{1}{2},0)^{\otimes n}_\mathbb{Q}$, where 
$L(\frac{1}{2},0)_\mathbb{Q}$ is the $\mathbb{Q}$-form of Proposition 
\ref{virmodqform}; this means that if $L(\frac{1}{2},0)^{\otimes 
n}_\mathcal{C}\cap 
L(\frac{1}{2},0)^{\otimes n}_{(m)}$ is a lattice, its rank must be no more than 
the dimension of $L(\frac{1}{2},0)^{\otimes n}_{(m)}$: any set of vectors in 
$L(\frac{1}{2},0)^{\otimes n}_\mathcal{C}\cap 
L(\frac{1}{2},0)^{\otimes n}_{(m)}$ which are linearly independent over $\Z$ are 
also linearly independent over $\mathbb{Q}$, since a dependence relation over 
$\mathbb{Q}$ reduces to a dependence relation over $\mathbb{Z}$ by clearing 
denominators. Thus it will suffice to show that  $L(\frac{1}{2},0)^{\otimes 
n}_\mathcal{C}\cap L(\frac{1}{2},0)^{\otimes n}_{(m)}$ is a lattice in 
$L(\frac{1}{2},0)^{\otimes n}_{(m)}$.
 
 To show that $L(\frac{1}{2},0)^{\otimes n}_\mathcal{C}$ contains multiples of 
$\omega^{(i)}$ for $1\leq i\leq n$, we claim that for any $i$,
 \begin{equation*}
  \dfrac{\vert\mathcal{C}\vert}{2}\omega^{(i)}=\sum_{T\in\mathcal{C},\,i\notin 
T} \omega_T\in L(\frac{1}{2},0)^{\otimes n}_\mathcal{C}.
 \end{equation*}
To prove this claim, consider any $j\neq i$, and let $\mathcal{C}_{ij}$ denote 
the one-dimensional code spanned by $\lbrace i,j\rbrace$. The dual code 
$\mathcal{C}_{ij}^\perp$ has dimension $n-1$ because it contains $2^{n-2}$ sets 
containing both $i$ and $j$ and $2^{n-2}$ sets containing neither $i$ nor $j$. 
Since $T_{ij}\in\mathcal{C}$ is not in $\mathcal{C}_{ij}^\perp$, 
$\mathcal{C}+\mathcal{C}_{ij}^\perp= P(\Omega)$, so that
\begin{equation*}
 \mathrm{dim}\,\mathcal{C}\cap\mathcal{C}_{ij}^\perp 
=\mathrm{dim}\,\mathcal{C}+\mathrm{dim}\,\mathcal{C}_{ij}^\perp-\mathrm{dim}\,
(\mathcal{C}+\mathcal{C}_{ij}^\perp)=\mathrm{dim}\,\mathcal{C}-1.
\end{equation*}
This means that half the sets in $\mathcal{C}$ contain neither or both of $i$ 
and $j$ and half contain exactly one of $i$ and $j$. Since 
$\Omega\in\mathcal{C}$, $\mathcal{C}$ is closed under complements. This means 
that of the sets in $\mathcal{C}$ which do not contain $i$, half contain $j$ 
and 
half do not. By the definition of the vectors $\omega_T$ for $T\in\mathcal{C}$, 
this proves the claim.

Now to finish the proof of the theorem, we just need to show that for any 
$m\in\mathbb{Z}$, $L(\frac{1}{2},0)^{\otimes n}_\mathcal{C}\cap 
L(\frac{1}{2},0)^{\otimes n}_{(m)}$ is the $\mathbb{Z}$-span of finitely many 
vectors. For $i$ such that $1\leq i\leq n$, we write
\begin{equation*}
 Y(\omega^{(i)},x)=\sum_{n\in\mathbb{Z}} L^{(i)}(n) x^{-n-2},
\end{equation*}
so that for any $i$ and $n\in\mathbb{Z}$,
\begin{equation*}
 L^{(i)}(n)=1^{\otimes (i-1)}\otimes L(n)\otimes 1^{\otimes (n-i)};
\end{equation*}
note that for $m,n\in\mathbb{Z}$, $[L^{(i)}(m),L^{(j)}(n)]=0$ when $i\neq j$. 
Also for $T\in\mathcal{C}$, we write
\begin{equation*}
 Y(\omega_T,x)=\sum_{n\in\mathbb{Z}} L_T(n) x^{-n-2}.
\end{equation*}
By Proposition \ref{zgen}, $L(\frac{1}{2},0)^{\otimes n}_\mathcal{C}$ is the 
$\mathbb{Z}$-span of vectors of the form
\begin{equation}\label{goodformbadspan}
 L_{T_1}(m_1)\cdots L_{T_k}(m_k)\mathbf{1}
\end{equation}
for $T_i\in\mathcal{C}$ and $m_i\in\mathbb{Z}$.

We now compute the following commutators for $S,T\in\mathcal{C}$ and 
$m,n\in\mathbb{Z}$:
\begin{align}\label{intformcomms}
 [L_S(m),L_T(n)]= & \left[\sum_{i\notin S} L^{(i)}(m)-\sum_{i\in S} 
L^{(i)}(m),\sum_{j\notin T} L^{(j)}(n)-\sum_{j\in T} 
L^{(j)}(n)\right]\nonumber\\
 = & \sum_{i\notin S+T} [L^{(i)}(m),L^{(i)}(n)]-\sum_{i\in S+T} 
[L^{(i)}(m),L^{(i)}(n)]\nonumber\\
 = &(m-n)\left(\sum_{i\notin S+T} L^{(i)}(m+n)-\sum_{i\in S+T} 
L^{(i)}(m+n)\right)\nonumber\\
 &+\dfrac{((n-\vert S+T\vert)-\vert 
S+T\vert)(m^3-m)}{24}\delta_{m+n,0}\nonumber\\
 = & (m-n)L_{S+T}(m+n)+\frac{n-2\vert S+T\vert}{4}\binom{m+1}{3}\delta_{m+n,0}.
\end{align}
Since $n\in 4\mathbb{Z}$ and $\mathcal{C}\subseteq\mathcal{E}(\Omega)$, and 
since $L_T(n)\mathbf{1}=0$ for $n\geq -1$, we can use these commutator 
relations 
to straighten products of the form (\ref{goodformbadspan}), so that  
$L(\frac{1}{2},0)^{\otimes n}_\mathcal{C}$ is the $\mathbb{Z}$-span of vectors 
of the form
\begin{equation*}
 L_{T_1}(-n_1)\cdots L_{T_k}(-n_k)\mathbf{1}
\end{equation*}
where $T_i\in\mathcal{C}$ and $n_1\geq\cdots\geq n_k>1$. Since $\omega_T$ has 
conformal weight $2$, $L_T(-n)$ raises conformal weight by $n$, using 
\eqref{homops}. Since also 
$\mathcal{C}$ is finite, this shows that for any $m\in\mathbb{Z}$ only finitely 
many vectors are required to span $L(\frac{1}{2},0)^{\otimes n}_\mathcal{C}\cap 
L(\frac{1}{2},0)^{\otimes n}_{(m)}$, as desired.
\end{proof}
\begin{rema}\label{redundant}
 Note that the generating set $\lbrace\omega_T\,\vert\, T\in\mathcal{C}\rbrace$ 
for 
$L(\frac{1}{2},0)^{\otimes n}_\mathcal{C}$ is redundant since if $T^c$ is the 
complement of $T\in\mathcal{C}$, then $\omega_{T^c}=-\omega_T$.
\end{rema}

\begin{rema}
 The assumption that $\Omega\in\mathcal{C}$, although convenient for the proof of Theorem \ref{goodform}, is not necessary since Remark \ref{redundant} shows that for any code $\mathcal{C}$, $L(\frac{1}{2},0)^{\otimes n}_\mathcal{C}=L(\frac{1}{2},0)^{\otimes n}_{\mathcal{C}+\mathbb{F}_2\Omega}$.
\end{rema}

We record two special cases of Theorem \ref{goodform} in the following 
corollary:
\begin{corol}\label{codespecialcases}
 If $n\in 4\mathbb{Z}$ and $\mathcal{C}\subseteq P(\Omega)$ equals 
$\mathcal{E}(\Omega)$ or is a Type II self-dual code, then 
$L(\frac{1}{2},0)^{\otimes n}_\mathcal{C}$ is an integral form of 
$L(\frac{1}{2},0)^{\otimes n}$.
\end{corol}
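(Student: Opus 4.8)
The plan is to derive both assertions directly from Theorem \ref{goodform}: in each case I would verify that the code $\mathcal{C}$ satisfies the three hypotheses of that theorem, namely $\mathcal{C}\subseteq\mathcal{E}(\Omega)$, $\Omega\in\mathcal{C}$, and the separating condition that for all distinct $i,j\in\Omega$ there is some $T_{ij}\in\mathcal{C}$ with $i\in T_{ij}$ and $j\notin T_{ij}$. Since $n\in 4\mathbb{Z}$ is positive, $n\geq 4$, which will be used to produce the sets $T_{ij}$.

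For $\mathcal{C}=\mathcal{E}(\Omega)$ all three conditions are essentially immediate. The inclusion $\mathcal{C}\subseteq\mathcal{E}(\Omega)$ is trivial, and $\Omega\in\mathcal{E}(\Omega)$ because $|\Omega|=n\in 4\mathbb{Z}\subseteq 2\mathbb{Z}$. For the separating condition, given distinct $i,j\in\Omega$, I would use $|\Omega|=n\geq 4>2$ to choose some $k\in\Omega\setminus\{i,j\}$ and set $T_{ij}=\{i,k\}$; this has even cardinality, hence lies in $\mathcal{E}(\Omega)=\mathcal{C}$, and clearly $i\in T_{ij}$ while $j\notin T_{ij}$. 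So Theorem \ref{goodform} applies.

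For a Type II self-dual code $\mathcal{C}$, the conditions $\Omega\in\mathcal{C}$ and $|T|\in 4\mathbb{Z}\subseteq 2\mathbb{Z}$ for all $T\in\mathcal{C}$ are part of the definition of Type II, and the latter gives $\mathcal{C}\subseteq\mathcal{E}(\Omega)$; thus only the separating condition requires an argument. I would argue by contradiction: suppose distinct $i,j\in\Omega$ admit no $T\in\mathcal{C}$ with $i\in T$ and $j\notin T$. If no codeword contains $i$, then $|T\cap\{i\}|=0$ for every $T\in\mathcal{C}$, so $\{i\}\in\mathcal{C}^\perp=\mathcal{C}$ by self-duality, contradicting $|\{i\}|=1\notin 4\mathbb{Z}$. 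Otherwise fix $T_0\in\mathcal{C}$ with $i\in T_0$; then $j\in T_0$ by hypothesis, and if some $T\in\mathcal{C}$ had $j\in T$ but $i\notin T$ then $T+T_0\in\mathcal{C}$ would contain $i$ but not $j$, again a contradiction. Hence every $T\in\mathcal{C}$ satisfies $i\in T\iff j\in T$, so $|T\cap\{i,j\}|$ is always even and $\{i,j\}\in\mathcal{C}^\perp=\mathcal{C}$, contradicting $|\{i,j\}|=2\notin 4\mathbb{Z}$. Therefore the separating condition holds and Theorem \ref{goodform} applies.

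The whole argument is routine once Theorem \ref{goodform} is available; the only step with any real content is the separating condition for Type II self-dual codes, where self-duality together with $4$-divisibility of weights is used to rule out the offending cardinality-$1$ and cardinality-$2$ codewords. I do not anticipate any genuine obstacle.
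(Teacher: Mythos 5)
Your proposal is correct and follows essentially the same route as the paper: verify the hypotheses of Theorem \ref{goodform}, take $T_{ij}=\{i,k\}$ for $\mathcal{E}(\Omega)$, and for a Type II self-dual code derive the contradiction $\{i,j\}\in\mathcal{C}^\perp=\mathcal{C}$ with $|\{i,j\}|=2\notin 4\mathbb{Z}$. The only cosmetic difference is that you establish ``every codeword meets $\{i,j\}$ evenly'' via linearity (adding a fixed codeword $T_0$) rather than via closure under complements as the paper does, which changes nothing of substance.
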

\begin{proof}
 Clearly $\Omega\in\mathcal{E}(\Omega)$ and $\Omega$ is in every Type II code 
by 
definition. Thus to apply Theorem \ref{goodform} we just need to show that for 
any distinct $i,j\in\Omega$, there is some $T_{ij}\in\mathcal{C}$ such that 
$i\in T_{ij}$ and $j\notin T_{ij}$. If $\mathcal{C}=\mathcal{E}(\Omega)$, then 
we can take $T_{ij}=\lbrace i,k\rbrace$ where $k$ is distinct from $i$ and $j$ 
(such a $k$ exists because $n\in 4\mathbb{Z}$). 
 
 If $\mathcal{C}$ is a Type II self-dual code, suppose for some $i$ and 
$j$ the desired $T_{ij}$ does not exist. Then since $\Omega\in\mathcal{C}$, 
$\mathcal{C}$ is closed under complements and every set in $\mathcal{C}$ 
contains both or neither of $i$ and $j$. This means $\lbrace 
i,j\rbrace\in\mathcal{C}^\perp=\mathcal{C}$, which is a contradiction since 
$\vert T\vert\in 4\mathbb{Z}$ if $T$ is a set in a Type II code. Hence the 
desired $T_{ij}$ exists.
\end{proof}

\begin{exam}
 We recall the smallest non-trivial Type II self-dual binary linear code, the 
Hamming code $\mathcal{H}$ on a set $\Omega$ of $8$ elements (see \cite{MS} 
Chapter 1 or \cite{FLM2} Chapter 10). The Hamming code can be described in 
several ways, but if we identify $\Omega=\lbrace 1,2,\ldots, 8\rbrace$, we can 
realize $\mathcal{H}$ explicitly as the sets
 \begin{align*}
  & \emptyset,\,\,\lbrace 1,2,3,4\rbrace,\,\,\lbrace 1,2,5,6\rbrace,\,\,\lbrace 
1,2,7,8\rbrace,\\
  & \lbrace 1,3,5,7\rbrace,\,\,\lbrace 2,4,5,7\rbrace,\,\,\lbrace 
2,3,6,7\rbrace,\,\,\lbrace 2,3,5,8\rbrace
 \end{align*}
and their complements.
\end{exam}

\section{Integral forms in modules for $L(\frac{1}{2},0)^{\otimes n}$}

We start this section by showing the existence of integral forms in certain 
modules for $L(\frac{1}{2},0)^{\otimes n}$ for $n\in 4\mathbb{Z}$. We use an 
integral form $L(\frac{1}{2},0)^{\otimes n}_\mathcal{C}$ where $\mathcal{C}$ is 
a code satisfying the conditions of Theorem \ref{goodform}. Recall that the 
irreducible $L(\frac{1}{2},0)^{\otimes n}$-modules are given by
\begin{equation*}
 W_H=L(\frac{1}{2},h_1)\otimes\cdots\otimes L(\frac{1}{2},h_n)
\end{equation*}
where $H=( h_1,\ldots, h_n)\in\lbrace 
0,\frac{1}{2},\frac{1}{16}\rbrace^n$.

\begin{propo}\label{virmodgoodform}
 Suppose $n\in 4\mathbb{Z}$ and $\mathcal{C}\subseteq\mathcal{E}(\Omega)$ is a 
code satisfying the conditions of Theorem \ref{goodform}. Moreover, suppose 
$H=( h_1,\ldots, h_n)\in\lbrace 0,\frac{1}{2}\rbrace^n$ is such that 
$h_i=\frac{1}{2}$ for an even number of $i$. If $v_H= 
v_{h_1}\otimes\cdots\otimes v_{h_n}$ where $v_{h_i}$ spans the lowest conformal 
weight space of $L(\frac{1}{2},h_i)$, then the $L(\frac{1}{2},0)^{\otimes 
n}_\mathcal{C}$-module $W_{H,\mathcal{C}}$ generated by $v_H$ is an integral 
form of $W_H$.
\end{propo}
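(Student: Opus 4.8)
The plan is to mimic the structure of the proof of Theorem \ref{goodform}, since $W_{H,\mathcal{C}}$ is built as a module over the integral form $L(\frac{1}{2},0)^{\otimes n}_\mathcal{C}$ generated by the single vector $v_H$. First I would verify compatibility with the conformal weight gradation: each generator $\omega_T$ has conformal weight $2$, and $v_H$ is homogeneous, so by \eqref{homops} and Proposition \ref{zgen} the $\mathbb{Z}$-span of coefficients of $Y(\omega_{T_1},x_1)\cdots Y(\omega_{T_k},x_k)v_H$ decomposes as a direct sum over conformal weights. Next, since $W_{H,\mathcal{C}}\subseteq (W_H)_\mathbb{Q}$, where $(W_H)_\mathbb{Q}$ is the $\mathbb{Q}$-form obtained as the tensor product of the $\mathbb{Q}$-forms $L(\frac{1}{2},h_i)_\mathbb{Q}$ from Proposition \ref{virmodqform}, linear independence over $\mathbb{Z}$ forces linear independence over $\mathbb{Q}$ and hence over $\mathbb{C}$; so once I know each graded piece $W_{H,\mathcal{C}}\cap (W_H)_{(m)}$ is a finitely generated abelian group, its rank is automatically at most $\dim (W_H)_{(m)}$, and the rank is at least $\dim (W_H)_{(m)}$ provided $W_{H,\mathcal{C}}$ spans $W_H$ over $\mathbb{C}$.

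So the two things that actually need work are: (1) the $\mathbb{C}$-span of $W_{H,\mathcal{C}}$ is all of $W_H$, and (2) each graded piece is finitely generated over $\mathbb{Z}$. For (2), I would use the commutator relations \eqref{intformcomms} computed in the proof of Theorem \ref{goodform}, which already have integral structure constants precisely because $n\in 4\mathbb{Z}$ and $\mathcal{C}\subseteq\mathcal{E}(\Omega)$. These let me straighten any product $L_{T_1}(m_1)\cdots L_{T_k}(m_k)v_H$ into a $\mathbb{Z}$-combination of products $L_{T_1}(-n_1)\cdots L_{T_k}(-n_k)v_H$ with $n_1\geq\cdots\geq n_k>0$ — here I must also note that $L_T(m)v_H$ for $m>0$ or for $m$ large enough is a $\mathbb{Z}$-combination of vectors of lower (or equal) weight built from $v_H$, using that $L^{(i)}(m)v_{h_i}=0$ for $m>0$ and that the action of $L^{(i)}(0)$ on $v_H$ is multiplication by the \emph{integer} $\sum_i h_i$ (which is where the hypothesis that $h_i=\tfrac12$ for an even number of $i$ enters, since then $\sum_i h_i\in\mathbb{Z}$, guaranteeing $L_T(0)v_H\in\mathbb{Z}v_H\subseteq W_{H,\mathcal{C}}$). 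Combined with the fact that $\mathcal{C}$ is finite and each $L_{T}(-n)$ raises weight by $n$, only finitely many such straightened monomials land in a given weight space.

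For (1), the key point is to show $W_{H,\mathcal{C}}$ contains a nonzero multiple of $v_H$ — which is immediate, $v_H\in W_{H,\mathcal{C}}$ — and then that it contains nonzero multiples of enough vectors to span each graded piece. The cleanest route is to observe that $L(\frac{1}{2},0)^{\otimes n}_\mathcal{C}$ contains the nonzero multiples $\tfrac{|\mathcal{C}|}{2}\omega^{(i)}$ of the generators $\omega^{(i)}$ of $L(\frac{1}{2},0)^{\otimes n}$, as established in the proof of Theorem \ref{goodform}; hence the $L(\frac{1}{2},0)^{\otimes n}_\mathcal{C}$-submodule generated by $v_H$, after extending scalars to $\mathbb{C}$, is an $L(\frac{1}{2},0)^{\otimes n}$-submodule of $W_H$ containing $v_H$, and since $W_H$ is irreducible (it is a tensor product of irreducible Virasoro modules, hence irreducible as an $L(\frac12,0)^{\otimes n}$-module) and is generated by $v_H$, the $\mathbb{C}$-span of $W_{H,\mathcal{C}}$ must be all of $W_H$. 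I expect the main obstacle — really the only subtle point — to be bookkeeping in the straightening step (2): making sure that pushing annihilation-type operators $L_T(m)$ with $m\geq 0$ to the right past the creation operators, and evaluating them on $v_H$, never introduces denominators. This works exactly because the relevant scalars are $(m-n)$, the binomial coefficients $\binom{m+1}{3}$ times $\tfrac{n-2|S+T|}{4}\in\mathbb{Z}$ (valid since $|S+T|$ is even and $n\in 4\mathbb{Z}$), and $L_T(0)v_H=(\sum_i h_i)v_H$ with $\sum_i h_i\in\mathbb{Z}$; so all straightening stays over $\mathbb{Z}$, and the argument goes through.
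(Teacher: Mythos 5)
Your proposal follows essentially the same route as the paper: compatibility with the grading from homogeneity of $\omega_T$ and $v_H$, the $\mathbb{Q}$-form/rank argument carried over from Theorem \ref{goodform}, spanning of $W_H$ over $\mathbb{C}$ from generation by $v_H$, and finite generation of each graded piece by straightening with the integral relations \eqref{intformcomms} together with integrality of the $L_T(0)$-eigenvalue on $v_H$ (the paper cites Proposition \ref{tensorgens} for the spanning step rather than irreducibility, but that is an inessential difference). One computational slip: $L^{(i)}(0)v_H=h_iv_H$, and $L_T(0)v_H$ is not $(\sum_i h_i)v_H$ but rather $\left(\sum_{i\notin T}h_i-\sum_{i\in T}h_i\right)v_H=\left(\tfrac{\vert S\vert}{2}-\vert S\cap T\vert\right)v_H$, where $S=\lbrace i\,\vert\, h_i=\tfrac{1}{2}\rbrace$; since $\vert S\vert$ is even (equivalently $\sum_i h_i\in\Z$) this eigenvalue is still an integer, so the integrality you need — and hence the rest of your argument — goes through exactly as in the paper.
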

\begin{proof}
 By Proposition \ref{tensorgens}, $W_H$ is generated as an 
$L(\frac{1}{2},0)^{\otimes n}$-module by $v_H$, so the 
$L(\frac{1}{2},0)^{\otimes n}_\mathcal{C}$-module generated by $v_H$ spans 
$W_H$ 
over $\mathbb{C}$. Also, $W_{H,\mathcal{C}}$ is compatible with the conformal 
weight grading of $W_H$ because $v_H$ is homogeneous of weight $h_1+\ldots+h_n$ 
and the generators $\omega_T$ for $T\in\mathcal{C}$ of 
$L(\frac{1}{2},0)^{\otimes n}_\mathcal{C}$ are homogeneous of weight $2$. Thus, 
as in the proof of Theorem \ref{goodform}, we just need to show that for any 
conformal weight $n\in\mathbb{C}$, $W_{H,\mathcal{C}}\cap (W_H)_{(n)}$ is 
spanned over $\mathbb{Z}$ by finitely many vectors.
 
 By Proposition \ref{zgen} and the commutation relations (\ref{intformcomms}), 
$W_{H,\mathcal{C}}$ is the $\mathbb{Z}$-span of products of the form
 \begin{equation}\label{modzformspan}
  L_{T_1}(-n_1)\cdots L_{T_k}(-n_k) v_H
 \end{equation}
for $T_i\in\mathcal{C}$ and $n_1\geq\ldots\geq n_k\geq 0$, as in the proof 
of Theorem \ref{goodform}. Now suppose that $S\subseteq\Omega$ is the set such 
that $h_i=\frac{1}{2}$ for $i\in S$, so that by assumption $\vert S\vert\in 
2\mathbb{Z}$. Then for $T\in\mathcal{C}$,
\begin{equation*}
 L_T(0)v_H=\left(\dfrac{\vert S\vert-\vert S\cap T\vert}{2}-\dfrac{\vert S\cap 
T\vert}{2}\right) v_H=\left(\dfrac{\vert S\vert}{2}-\vert S\cap 
T\vert\right)v_H\in\mathbb{Z} v_H.
\end{equation*}
Thus $W_{H,\mathcal{C}}$ is in fact the $\mathbb{Z}$-span of products of the 
form (\ref{modzformspan}) where now $n_1\geq\ldots\geq n_k >0$, and we conclude 
that 
$W_{H,\mathcal{C}}\cap (W_H)_{(n)}$ is spanned over $\mathbb{Z}$ by finitely 
many vectors for any $n\in\mathbb{C}$, just as in the proof of Theorem 
\ref{goodform}.
\end{proof}

\begin{rema}
 Note that in the statement of Proposition \ref{virmodgoodform}, the condition 
that $h_i=\frac{1}{2}$ for an even number of $i$ is equivalent to the condition 
that $h_1+\ldots+h_n\in\Z$. In this case, all conformal weights of $W_H$ are integers.
\end{rema}

We see that if $H=( h_1,\ldots,h_n)\in\lbrace 0,\frac{1}{2},\frac{1}{16}\rbrace 
^n$ is such that $h_i=\frac{1}{2}$ for an odd number of $i$ or if 
$h_i=\frac{1}{16}$ for any $i$, the $L(\frac{1}{2},0)^{\otimes 
n}_\mathcal{C}$-module generated by $v_H$ will not usually be an integral form 
of $W_H$. This is because we will generally have $L_T(0)v_H\notin\mathbb{Z} 
v_h$ 
for $T\in\mathcal{C}$, even if $\mathcal{C}$ satisfies the conditions of 
Theorem 
\ref{goodform}. For the remainder of this section, we will concentrate on the 
case $n=16$, since the lattice vertex operator algebra $V_{E_8}$ based on the 
$E_8$ root lattice contains a subalgebra isomorphic to 
$L(\frac{1}{2},0)^{\otimes 16}$ (\cite{DMZ}); thus our work here suggests an 
alternate approach to obtaining integral structure in $V_{E_8}$, different from 
the approach to obtaining integral forms in lattice vertex operator algebras in 
\cite{B}, \cite{P}, \cite{DG}, and \cite{M2}. Since the moonshine module vertex 
operator algebra 
$V^\natural$ similarly contains a subalgebra isomorphic to 
$L(\frac{1}{2},0)^{\otimes 48}$ (\cite{DMZ}), the same ideas may lead to the 
construction of explicit integral structure in $V^\natural$.

The decomposition of $V_{E_8}$ into submodules for $L(\frac{1}{2},0)^{\otimes 
16}$ from \cite{DMZ} is as follows:
\begin{equation*}
 V_{E_8}=\coprod_{\substack{H=( h_1,\ldots,h_{16})\in\lbrace 
0,\frac{1}{2}\rbrace^{16}\\ h_1+\ldots+h_{16}\in\mathbb{Z}}} W_H\oplus 2^7 
W_{(\frac{1}{16},\cdots,\frac{1}{16})}.
\end{equation*}
\begin{rema}
 Actually, according to \cite{DMZ}, the multiplicity of 
$W_{(\frac{1}{16},\cdots,\frac{1}{16})}$ in $V_{E_8}$ is $2^8$. 
However, this seems to be an error; there are $\binom{16}{2}=120$ submodules 
$W_H$ where $H=( h_1,\ldots,h_{16})\in\lbrace 0,\frac{1}{2}\rbrace^{16}$ is such 
that $h_1+\ldots+h_{16}=1$, 
and then $\binom{16}{2}+2^7=248$ gives the correct dimension of 
$(V_{E_8})_{(1)}$.
\end{rema}
\noindent Since we have already obtained integral structure in the modules 
$W_H$ 
for which 
each $h_i\in\lbrace 0,\frac{1}{2}\rbrace$ and $h_1+\ldots+h_{16}\in\mathbb{Z}$, 
we now find a code $\mathcal{C}_{16}$ on a $16$-element set such 
that $W_{(\frac{1}{16},\cdots,\frac{1}{16})}$ has an integral form 
generated as an $L(\frac{1}{2},0)^{\otimes 16}_{\mathcal{C}_{16}}$-module by 
$v_{(\frac{1}{16},\cdots,\frac{1}{16})}$. 

To construct $\mathcal{C}_{16}$, we consider an $8$-element set $\Omega=\lbrace 
1,2,\ldots, 8\rbrace$ and an $8$-element set $\Omega'=\lbrace 
1',2',\ldots,8'\rbrace$. For any $T\subseteq\Omega$, we use $T'$ to denote the 
corresponding set in $\Omega'$. Suppose $\mathcal{H}$ is a Hamming code in 
$P(\Omega)$. Then we define $\mathcal{C}_{16}$ to be the code in 
$P(\Omega\cup\Omega')$ generated as an $\mathbb{F}_2$-vector space by the sets 
$T\cup T'$ for $T\in\mathcal{H}$ and $\Omega'$. Thus $\mathcal{C}_{16}$ is the 
$5$-dimensional code consisting of the $32$ sets $T\cup T'$ and $T\cup (T')^c$ 
for $T\in\mathcal{H}$.
\begin{propo}\label{c16code}
 The binary linear code $\mathcal{C}_{16}$ satisfies the conditions of Theorem 
\ref{goodform}, and for any $T\in\mathcal{C}_{16}$, $\vert T\vert\in 
8\mathbb{Z}$.
\end{propo}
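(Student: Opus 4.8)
The plan is to check the three hypotheses of Theorem \ref{goodform} for the $16$-element set $\Omega\cup\Omega'$ and to read off the divisibility claim $|T|\in 8\mathbb{Z}$ along the way, since that claim actually implies $\mathcal{C}_{16}\subseteq\mathcal{E}(\Omega\cup\Omega')$.

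First I would record that since $\mathcal{H}$ is a Type II code of length $8$, every $T\in\mathcal{H}$ has $|T|\in 4\mathbb{Z}$, hence $|T|\in\{0,4,8\}$. Every element of $\mathcal{C}_{16}$ has the form $T\cup T'$ or $T\cup(T')^c$ with $T\in\mathcal{H}$, and since $T\subseteq\Omega$ and $T'\subseteq\Omega'$ are disjoint,
\[
 |T\cup T'|=2|T|\in\{0,8,16\},\qquad |T\cup(T')^c|=|T|+(8-|T|)=8.
\]
Thus every codeword of $\mathcal{C}_{16}$ has cardinality in $\{0,8,16\}\subseteq 8\mathbb{Z}$; this proves the stated divisibility and, in particular, gives $\mathcal{C}_{16}\subseteq\mathcal{E}(\Omega\cup\Omega')$.

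Next, $\Omega\in\mathcal{H}$ because a Type II self-dual code contains the full set, so $\Omega\cup\Omega'$ is one of the chosen generators of $\mathcal{C}_{16}$; adding to it the other generator $\Omega'$ shows $\Omega\in\mathcal{C}_{16}$ as well. It remains to establish the separation condition. Fix distinct $i,j\in\Omega\cup\Omega'$ and write $\bar x\in\Omega$ for the underlying index of $x$ (so $\bar k=k$ and $\overline{k'}=k$). By Corollary \ref{codespecialcases}, $\mathcal{H}$ satisfies the hypotheses of Theorem \ref{goodform}; in particular, if $\bar i\neq\bar j$ there is $S\in\mathcal{H}$ with $\bar i\in S$ and $\bar j\notin S$, and then $S\cup S'\in\mathcal{C}_{16}$ contains $i$ (one of $S,S'$ does, according as $i\in\Omega$ or $i\in\Omega'$) but not $j$ (neither $S$ nor $S'$ does, since $\bar j\notin S$). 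If instead $\bar i=\bar j$, then $i\neq j$ forces exactly one of $i,j$ into $\Omega'$; taking $T_{ij}=\Omega$ when $i\in\Omega$ and $T_{ij}=\Omega'$ when $i\in\Omega'$ separates $i$ from $j$, and both $\Omega$ and $\Omega'$ lie in $\mathcal{C}_{16}$. This verifies all hypotheses of Theorem \ref{goodform}, completing the proof.

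This is a routine combinatorial verification, so I do not anticipate a genuine obstacle. The one place where the asymmetric part of the construction (the generator $\Omega'$, as opposed to the subcode generated by the sets $T\cup T'$) is essential is the diagonal case $\bar i=\bar j$: no codeword of the form $T\cup T'$ can separate $k$ from $k'$, so one is forced to invoke $\Omega'$ (equivalently $\Omega$) there.
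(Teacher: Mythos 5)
Your proof is correct and follows essentially the same route as the paper: verify $\Omega\cup\Omega'\in\mathcal{C}_{16}$, use Corollary \ref{codespecialcases} for the Hamming code to separate indices with distinct underlying labels via $S\cup S'$, handle the pair $k,k'$ separately, and compute codeword cardinalities from $|T|\in\{0,4,8\}$. The only (cosmetic) difference is that for the diagonal pair you invoke $\Omega$ or $\Omega'$ directly, whereas the paper uses a codeword of the form $T\cup(T')^c$ with $i\in T$; both rest on the same facts about $\mathcal{C}_{16}$.
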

\begin{proof}
 We have $\Omega\cup\Omega'\in\mathcal{C}_{16}$ since $\Omega\in\mathcal{H}$. 
We 
need to show that for any distinct $i,j\in\Omega\cup\Omega'$, there is some 
$T_{ij}\in\mathcal{C}_{16}$ such that $i\in T_{ij}$ but $j\notin T_{ij}$. Now 
for any $i\in\Omega$ and $j\neq i,i'$, there is some $T\in\mathcal{H}$ such 
that 
$i\in T$ but $j\notin T$ (if $j\in\Omega$) or $j\notin T'$ (if $j\in\Omega'$), 
using Corollary \ref{codespecialcases} since $\mathcal{H}$ is a Type II 
self-dual code. In this case we can take $T_{ij}=T\cup T'$. If $j=i'$, then 
$i\in T$ 
for some $T\in\mathcal{H}$, and we can take $T_{ij}=T\cup (T')^c$. The proof if 
$i\in\Omega'$ is the same.
 
 To prove the second assertion of the proposition, we note that for any 
$T\in\mathcal{H}$, $\vert T\vert\in 4\mathbb{Z}$. If $\vert T\vert=0$, then 
$\vert T\cup T'\vert=0$ and $\vert T\cup (T')^c\vert=8$. If $\vert T\vert =4$, 
then both $\vert T\cup T'\vert=8$ and $\vert T\cup (T')^c\vert=8$. If $\vert 
T\vert =8$, then $\vert T\cup T'\vert =16$ and $\vert T\cup (T')^c\vert=8$.
\end{proof}

\begin{propo}\label{modc16}
 Suppose $n=16$ and $H=(\frac{1}{16},\cdots,\frac{1}{16})$. Then the 
$L(\frac{1}{2},0)^{\otimes 16}_{\mathcal{C}_{16}}$-submodule 
$W_{H,\mathcal{C}_{16}}$ of $W_H$ generated by $v_H$ is an integral form of 
$W_H$.
\end{propo}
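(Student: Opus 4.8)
The plan is to follow essentially the same line of argument used for Proposition~\ref{virmodgoodform}, the only new ingredient being the computation of $L_T(0)v_H$ for $T\in\mathcal{C}_{16}$ when $H=(\frac{1}{16},\ldots,\frac{1}{16})$.

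First I would dispatch the structural facts. Since $W_H$ is generated as an $L(\frac{1}{2},0)^{\otimes 16}$-module by $v_H$ (Proposition~\ref{tensorgens}), the subring $W_{H,\mathcal{C}_{16}}$ spans $W_H$ over $\mathbb{C}$. Because $v_H$ is homogeneous of conformal weight $16\cdot\frac{1}{16}=1$ and each generator $\omega_T$ of $L(\frac{1}{2},0)^{\otimes 16}_{\mathcal{C}_{16}}$ is homogeneous of weight $2$, Proposition~\ref{zgen} together with the weight formula~\eqref{homops} shows that $W_{H,\mathcal{C}_{16}}$ is compatible with the conformal weight grading of $W_H$ in the sense of~\eqref{modcompatibility}. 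So, exactly as in Theorem~\ref{goodform}, it remains to prove that for each $m\in\mathbb{C}$ the group $W_{H,\mathcal{C}_{16}}\cap (W_H)_{(m)}$ is the $\mathbb{Z}$-span of finitely many vectors: then it is a lattice whose rank is at least $\dim (W_H)_{(m)}$ by the spanning property, and at most $\dim (W_H)_{(m)}$ because $W_{H,\mathcal{C}_{16}}$ is contained in the $\mathbb{Q}$-form $L(\frac{1}{2},\frac{1}{16})_{\mathbb{Q}}^{\otimes 16}$ of $W_H$ obtained from Proposition~\ref{virmodqform} (vectors there that are $\mathbb{Z}$-independent are $\mathbb{Q}$-independent, by clearing denominators).

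Next I would reduce the spanning set. By Proposition~\ref{c16code}, $\mathcal{C}_{16}$ satisfies the hypotheses of Theorem~\ref{goodform}, so the commutation relations~\eqref{intformcomms} are integral and can be used, just as in Theorem~\ref{goodform} and Proposition~\ref{virmodgoodform}, to straighten the products of Proposition~\ref{zgen}: $W_{H,\mathcal{C}_{16}}$ is the $\mathbb{Z}$-span of vectors $L_{T_1}(-n_1)\cdots L_{T_k}(-n_k)v_H$ with $T_i\in\mathcal{C}_{16}$ and $n_1\geq\cdots\geq n_k\geq 0$ (the terms in which some $L_{T}(n)$, $n\geq 1$, hits $v_H$ vanish). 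The key step is then the identity
\[
 L_T(0)v_H=\Bigl(\sum_{i\notin T}\tfrac{1}{16}-\sum_{i\in T}\tfrac{1}{16}\Bigr)v_H=\frac{16-2|T|}{16}\,v_H=\frac{8-|T|}{8}\,v_H ,
\]
which lies in $\mathbb{Z}v_H$ precisely because $|T|\in 8\mathbb{Z}$ by the second assertion of Proposition~\ref{c16code}. Hence the factors $L_{T}(0)$ may be pulled out as integer scalars, so $W_{H,\mathcal{C}_{16}}$ is in fact the $\mathbb{Z}$-span of vectors $L_{T_1}(-n_1)\cdots L_{T_k}(-n_k)v_H$ with $n_1\geq\cdots\geq n_k>0$; since $L_{T}(-n)$ raises conformal weight by $n$ and $\mathcal{C}_{16}$ is finite, only finitely many such vectors lie in any given weight space, completing the proof.

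The only genuinely load-bearing point, and the thing I would be most careful about, is the integrality of $L_T(0)v_H$: this is exactly why $\mathcal{C}_{16}$ was built so that every codeword has cardinality divisible by $8$, rather than being an arbitrary code satisfying Theorem~\ref{goodform}. If $\mathcal{C}_{16}$ contained a codeword $T$ with $|T|\equiv 4\pmod 8$, then $L_T(0)v_H=\frac{1}{2}v_H\notin\mathbb{Z}v_H$ and this straightening would fail. Everything else is a routine transcription of the arguments already given for Theorem~\ref{goodform} and Proposition~\ref{virmodgoodform}.
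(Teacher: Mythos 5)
Your proposal is correct and follows essentially the same route as the paper: the paper simply cites the proof of Proposition \ref{virmodgoodform} and notes the one new point, namely that $L_T(0)v_H=\frac{16-2|T|}{16}v_H\in\mathbb{Z}v_H$ because $|T|\in 8\mathbb{Z}$ by Proposition \ref{c16code}, which is exactly the load-bearing step you identify. The additional structural details you spell out (spanning, grading compatibility, the $\mathbb{Q}$-form and straightening arguments) are the ones the paper implicitly imports from Theorem \ref{goodform} and Proposition \ref{virmodgoodform}.
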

\begin{proof}
 The proof is the same as the proof of Proposition \ref{virmodgoodform}, except 
that for $T\in\mathcal{C}_{16}$,
 \begin{equation*}
  L_T(0)v_H=\left(\dfrac{16-\vert T\vert}{16}-\dfrac{\vert 
T\vert}{16}\right)v_H=\dfrac{16-2\vert T\vert}{16} v_H\in\mathbb{Z}v_H
 \end{equation*}
because $\vert T\vert\in 8\mathbb{Z}$ by Proposition \ref{c16code}.
\end{proof}

\section{Contragredients and integral intertwining operators}

In this section, we fix $n\in 4\mathbb{Z}$ and a code 
$\mathcal{C}\subseteq\mathcal{E}(\Omega)$ on an $n$-element set $\Omega$ 
satisfying the conditions of Theorem \ref{goodform}. 
We will consider the contragredients of and intertwining operators among 
$L(\frac{1}{2},0)^{\otimes n}_\mathcal{C}$-modules.

We first observe that since $L(\frac{1}{2},0)_{(0)}=\mathbb{C}\mathbf{1}$, and 
$L(\frac{1}{2},0)_{(1)}=0$, \cite{Li} implies that $L(\frac{1}{2},0)$ has a 
one-dimensional space of invariant bilinear forms $(\cdot,\cdot)$; since in 
addition $L(\frac{1}{2},0)$ is a simple vertex operator algebra, any non-zero 
such form is nondegenerate. In particular, $L(\frac{1}{2},0)$ is 
self-contragredient. In fact, any irreducible $L(\frac{1}{2},0)$-module is 
self-contragredient, since the contragredient of an irreducible module is 
irreducible and the contragredient of a module has the same conformal weights 
as 
the module. (Note that the conformal weights of $L(\frac{1}{2},h)$ lie in 
$h+\mathbb{N}$ for $h=0,\frac{1}{2},\frac{1}{16}$.) 

For $h=0,\frac{1}{2},\frac{1}{16}$, we let $(\cdot,\cdot)_h$ denote the 
invariant bilinear form on $L(\frac{1}{2},h)$ such that $(v_h,v_h)=1$, where 
$v_h$ is a lowest conformal weight vector generating $L(\frac{1}{2},h)$ and 
$v_0=\mathbf{1}$. We note that for any $n\geq 1$, irreducible modules for 
$L(\frac{1}{2},0)^{\otimes n}$ are self-contragredient since for any $H=( 
h_1,\ldots h_n)\in\lbrace 0,\frac{1}{2},\frac{1}{16}\rbrace^n$, the module 
$W_H=L(\frac{1}{2},h_1)\otimes\cdots\otimes L(\frac{1}{2},h_n)$ has a
nondegenerate invariant bilinear form $(\cdot,\cdot)_H$ determined by
\begin{equation*}
 (v_1\otimes\cdots\otimes v_n,w_1\otimes\cdots\otimes w_n)_H=\prod_{i=1}^n 
(v_i,w_i)_{h_i},
\end{equation*}
where $v_i,w_i\in L(\frac{1}{2},h_i)$ for $i=1,\ldots ,n$.

Now take $\Omega=\lbrace 1,\ldots, n\rbrace$ and suppose 
$\mathcal{C}\subseteq\mathcal{E}(\Omega)$ is a code satisfying the conditions 
of 
Theorem \ref{goodform}. Since $L(1)\omega_T=0$ for any $T\in\mathcal{C}$, 
Propositions \ref{contmod} and \ref{invarunderl1} immediately imply:
\begin{propo}
 Graded $\Z$-duals of $L(\frac{1}{2},0)^{\otimes n}_\mathcal{C}$-modules are 
$L(\frac{1}{2},0)^{\otimes n}_\mathcal{C}$-modules.
\end{propo}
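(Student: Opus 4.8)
The plan is to verify the two hypotheses of Propositions~\ref{contmod} and~\ref{invarunderl1} for the integral form $L(\frac{1}{2},0)^{\otimes n}_\mathcal{C}$ and then apply them, first to the vertex operator algebra and then, with a small extension, to its modules. First I would recall that by Theorem~\ref{goodform}, $L(\frac{1}{2},0)^{\otimes n}_\mathcal{C}$ is generated (as a vertex subring) by the vectors $\omega_T$ for $T\in\mathcal{C}$. Each $\omega_T=\sum_{i\notin T}\omega^{(i)}-\sum_{i\in T}\omega^{(i)}$ is a linear combination of the conformal vectors $\omega^{(i)}$ of the tensor factors, and $L(1)\omega^{(i)}=L^{(i)}(1)(\mathbf{1}^{\otimes(i-1)}\otimes\omega\otimes\mathbf{1}^{\otimes(n-i)})=\mathbf{1}^{\otimes(i-1)}\otimes L(1)\omega\otimes\mathbf{1}^{\otimes(n-i)}$, which vanishes because $L(1)\omega=L(1)L(-2)\mathbf{1}=3L(-1)\mathbf{1}=0$ in $L(\frac{1}{2},0)$ (recall $L(-1)\mathbf{1}=0$). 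Hence $L(1)\omega_T=0$ for every $T\in\mathcal{C}$. By Proposition~\ref{invarunderl1}, $L(\frac{1}{2},0)^{\otimes n}_\mathcal{C}$ is therefore preserved by $\frac{L(1)^m}{m!}$ for all $m\geq 0$, and by Proposition~\ref{contmod} the graded $\Z$-dual $W'_\Z$ of any $L(\frac{1}{2},0)^{\otimes n}_\mathcal{C}$-module with integral form $W_\Z$ is again preserved by the action of $L(\frac{1}{2},0)^{\otimes n}_\mathcal{C}$.

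The one point that needs care is that Proposition~\ref{contmod} as stated guarantees $W'_\Z$ is \emph{preserved} by the action of $V_\Z$; to conclude it is a genuine $V_\Z$-\emph{module} (an integral form of $W'$), I would additionally invoke the standard fact, already used implicitly in the excerpt's discussion of graded $\Z$-duals, that $W'_\Z$ is an integral form of $W'$ as a vector space (being the $\Z$-dual of the lattice $W_\Z$ in each finite-dimensional weight space), and that it is compatible with the conformal weight grading of $W'$ since the pairing between $W$ and $W'$ pairs the weight space $W_{(h)}$ with $W'_{(h)}$. Combining these, $W'_\Z$ is a $V_\Z$-submodule of $W'$ that is an integral form of $W'$ as a vector space and is grading-compatible, i.e.\ an integral form in the sense of Definition~\ref{compatibility}ff. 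This handles all the modules constructed earlier --- the integral forms $W_{H,\mathcal{C}}$ of Proposition~\ref{virmodgoodform} and $W_{H,\mathcal{C}_{16}}$ of Proposition~\ref{modc16} --- as well as $L(\frac{1}{2},0)^{\otimes n}_\mathcal{C}$ itself viewed as a module over itself.

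I do not anticipate a serious obstacle here: the only mild subtlety is making sure the relevant $L(\frac{1}{2},0)^{\otimes n}_\mathcal{C}$-modules being referred to are exactly those with integral forms, so that "$L(\frac{1}{2},0)^{\otimes n}_\mathcal{C}$-modules" in the statement means modules equipped with an integral form $W_\Z$, and then the computation $L(1)\omega_T=0$ does all the work. The proof is essentially one displayed line of verification plus citations, which is presumably why the author writes "immediately imply."
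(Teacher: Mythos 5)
Your proof is correct and is essentially the paper's own argument: the paper likewise observes that $L(1)\omega_T=0$ for the generators $\omega_T$ and then cites Propositions \ref{invarunderl1} and \ref{contmod} to conclude. Your extra remarks (the componentwise computation $L(1)\omega^{(i)}=0$ and the observation that the graded $\Z$-dual is automatically an integral form of $W'$ compatible with the grading) just spell out details the paper leaves implicit in Section 2.
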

\noindent Moreover, if $W_H$ is an irreducible $L(\frac{1}{2},0)^{\otimes 
n}$-module such 
that the $L(\frac{1}{2},0)^{\otimes n}_\mathcal{C}$-module $W_{H,\mathcal{C}}$ 
generated by a lowest conformal weight vector $v_H$ is an integral form of 
$W_H$, then we can use the form $(\cdot,\cdot)_H$ to identify 
$W_{H,\mathcal{C}}'$ with another, generally different, integral form of $W_H$.

We now consider intertwining operators among $L(\frac{1}{2},0)^{\otimes 
n}$-modules and state the main result of this section:
\begin{theo}\label{virintwop}
 Suppose $n\in 4\mathbb{Z}$, $\Omega=\lbrace 1,\ldots, n\rbrace$, and 
$\mathcal{C}\subseteq\mathcal{E}(\Omega)$ is a code satisfying the conditions 
of 
Theorem \ref{goodform}. In addition, suppose $W_{H^{(i)}}$ for $i=1,2,3$ are 
irreducible $L(\frac{1}{2},0)^{\otimes n}$-modules such that the 
$L(\frac{1}{2},0)^{\otimes n}_\mathcal{C}$-modules $W_{H^{(i)},\mathcal{C}}$ 
generated by a lowest conformal weight vector $v_{H^{(i)}}$ are integral forms 
of $W_{H^{(i)}}$. Then an intertwining operator $\mathcal{Y}$ of type 
$\binom{W_{H^{(3)}}}{W_{H^{(1)}}\,W_{H^{(2)}}}$ is integral with respect to 
$W_{H^{(1)},\mathcal{C}}$, $W_{H^{(2)},\mathcal{C}}$, and 
$W_{H^{(3)},\mathcal{C}}'$ if and only if the coefficient of 
$\mathcal{Y}(v_{H^{(1)}},x)v_{H^{(2)}}$ in the lowest weight space of 
$W_{H^{(3)}}$ is in $\mathbb{Z} v_{H^{(3)}}$, where we 
identify $W_{H^{(3)}}$ with $W_{H^{(3)}}'$ using $(\cdot,\cdot)_{H^{(3)}}$.
\end{theo}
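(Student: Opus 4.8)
The forward implication is immediate: if $\mathcal{Y}(w_{(1)},x)w_{(2)}\in(W_{H^{(3)},\mathcal{C}})'\{x\}$ for all $w_{(i)}\in W_{H^{(i)},\mathcal{C}}$, then pairing $\mathcal{Y}(v_{H^{(1)}},x)v_{H^{(2)}}$ against $v_{H^{(3)}}\in W_{H^{(3)},\mathcal{C}}$ and using $(v_{H^{(3)}},v_{H^{(3)}})_{H^{(3)}}=1$ shows that the component of $\mathcal{Y}(v_{H^{(1)}},x)v_{H^{(2)}}$ in $\mathbb{C}v_{H^{(3)}}$ — which by matching conformal weights occurs at a single power of $x$ and is exactly the coefficient in the lowest weight space of $W_{H^{(3)}}$ — lies in $\mathbb{Z}v_{H^{(3)}}$. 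The substance is in the converse.

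For the converse, the first step is to invoke Theorem \ref{intwopgen}: since $W_{H^{(i)},\mathcal{C}}$ is generated as an $L(\frac{1}{2},0)^{\otimes n}_\mathcal{C}$-module by the single lowest weight vector $v_{H^{(i)}}$ for $i=1,2$, it suffices to prove $\mathcal{Y}(v_{H^{(1)}},x)v_{H^{(2)}}\in(W_{H^{(3)},\mathcal{C}})'\{x\}$, that is, $\langle w_{(3)},\mathcal{Y}(v_{H^{(1)}},x)v_{H^{(2)}}\rangle\in\mathbb{Z}\{x\}$ for every $w_{(3)}$ in a generating set for $W_{H^{(3)},\mathcal{C}}$, where $\langle\cdot,\cdot\rangle=(\cdot,\cdot)_{H^{(3)}}$ under the identification $W_{H^{(3)}}\cong W_{H^{(3)}}'$. (This is where it is essential that the third integral form is the graded $\Z$-dual rather than $W_{H^{(3)},\mathcal{C}}$ itself.) By Proposition \ref{zgen}, $W_{H^{(3)},\mathcal{C}}$ is the $\mathbb{Z}$-span of the vectors $L_{T_1}(m_1)\cdots L_{T_k}(m_k)v_{H^{(3)}}$ with $T_i\in\mathcal{C}$; the commutation relations (\ref{intformcomms}) — whose structure constants are integers exactly because $n\in4\mathbb{Z}$ and $\mathcal{C}\subseteq\mathcal{E}(\Omega)$ — together with $L_T(m)v_{H^{(3)}}=0$ for $m\geq1$ and $L_T(0)v_{H^{(3)}}\in\mathbb{Z}v_{H^{(3)}}$, let one restrict to $w_{(3)}=L_{T_1}(-r_1)\cdots L_{T_k}(-r_k)v_{H^{(3)}}$ with all $r_i\geq1$.

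The core of the proof is then to express each such matrix element in terms of the single scalar $\lambda$ appearing in the hypothesis (the coefficient of $\mathcal{Y}(v_{H^{(1)}},x)v_{H^{(2)}}$ in the lowest weight space of $W_{H^{(3)}}$), acquiring only integer factors. Using invariance of $(\cdot,\cdot)_{H^{(3)}}$, under which the adjoint of $L_T(m)$ is $L_T(-m)$, one transfers the operators $L_{T_i}(r_i)$ over to act on $\mathcal{Y}(v_{H^{(1)}},x)v_{H^{(2)}}$, where they annihilate $v_{H^{(2)}}$; the commutator formula for intertwining operators gives $[L_T(r),\mathcal{Y}(w,x)]=\sum_{j\geq0}\binom{r+1}{j}x^{r+1-j}\mathcal{Y}(L_T(j-1)w,x)$, a finite sum with integer coefficients. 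The other input is the operator product expansion of $Y(\omega_S,y)$ with $Y(\omega_T,z)$, whose singular part is
\begin{equation*}
 \frac{n-2|S+T|}{4}(y-z)^{-4}+2\,Y(\omega_{S+T},z)(y-z)^{-2}+\partial_z Y(\omega_{S+T},z)(y-z)^{-1},
\end{equation*}
which is again integral since $\frac{n-2|S+T|}{4}\in\mathbb{Z}$; I would use this, together with the integrality of the commutators, to rewrite the matrix element as $\lambda$ times an integer Laurent polynomial in $x$ by induction on the number of $\omega_T$-insertions, with the hypothesis $\lambda\in\mathbb{Z}$ as the base case.

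The step I expect to be the main obstacle is closing this induction. Commuting an $L_T(r)$ past $\mathcal{Y}(v_{H^{(1)}},x)$ produces a term $\mathcal{Y}(L_T(-1)v_{H^{(1)}},x)$ in which the lowest weight vector has been replaced by a genuine descendant, and naively shuttling such $L_T(-1)$'s among the three modules (using the commutator formula and skew-symmetry) only yields tautologies rather than a drop in complexity. Resolving this requires the finer bookkeeping available through the cross-brackets of conformal-weight-$2$ vertex operators from \cite{FLM2} Section 8.9: because the $\omega_T$ have conformal weight $2$, it is their cross-brackets with $\mathcal{Y}$, rather than their ordinary commutators, that interact with the integral forms in a way that closes the recursion — which is precisely why these ideas, flagged in the introduction, are needed here.
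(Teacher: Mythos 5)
Your setup matches the paper's: the forward direction is indeed immediate, and the reduction via Theorem \ref{intwopgen} to showing $(\mathcal{Y}(v_{H^{(1)}},x)v_{H^{(2)}},w)_{H^{(3)}}\in\mathbb{Z}\lbrace x\rbrace$ for $w$ running over the spanning set $L_{T_1}(-r_1)\cdots L_{T_k}(-r_k)v_{H^{(3)}}$ (using \eqref{intformcomms} and $L_T(0)v_{H^{(3)}}\in\mathbb{Z}v_{H^{(3)}}$), together with the adjointness of $L_T(m)$ and $L_T(-m)$, is exactly how the paper begins. But the core of the argument is missing. You correctly diagnose that the ordinary commutator formula produces the descendant term $\mathcal{Y}(L_T(-1)v_{H^{(1)}},x)$ and that shuttling it around does not reduce complexity, and you assert that ``cross-brackets'' resolve this --- but you never derive the resolution, and you explicitly flag the closing of the induction as an unresolved obstacle. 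That unproved step is precisely the paper's Lemma \ref{crossbracketslemma}: taking the coefficient of $x_0^{-2}$ in the Jacobi identity (i.e.\ applying $\mathrm{Res}_{x_0}\,x_0$, the cross-bracket move), the extra factor of $x_0$ eliminates the $L_T(-1)$ contribution, and since $L_T(i)v_{H^{(1)}}=0$ for $i>0$ only the $L_T(0)$ term survives, giving
\begin{equation*}
 [L_T(m+1),\mathcal{Y}(v_{H^{(1)}},x)]-x\,[L_T(m),\mathcal{Y}(v_{H^{(1)}},x)]=x^{m+1}\mathcal{Y}(L_T(0)v_{H^{(1)}},x),
\end{equation*}
and hence by induction on $m$ the closed form $[L_T(m),\mathcal{Y}(v_{H^{(1)}},x)]=x^m[L_T(0),\mathcal{Y}(v_{H^{(1)}},x)]+mx^m\mathcal{Y}(L_T(0)v_{H^{(1)}},x)$ for $m\geq 0$, in which no descendants of $v_{H^{(1)}}$ ever appear.

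With that identity the induction closes immediately, and this is the step your proposal cannot complete as written: inducting on the conformal weight of $w$, one moves $L_T(m)$ across the pairing, applies the identity, and moves $L_T(0)$ back, so that $(\mathcal{Y}(v_{H^{(1)}},x)v_{H^{(2)}},L_T(-m)w)_{H^{(3)}}$ becomes an integer Laurent-polynomial combination of $(\mathcal{Y}(v_{H^{(1)}},x)v_{H^{(2)}},L_T(0)w)_{H^{(3)}}$ and of pairings in which $v_{H^{(1)}}$ or $v_{H^{(2)}}$ is replaced by $L_T(0)v_{H^{(i)}}\in\mathbb{Z}v_{H^{(i)}}$; since $L_T(0)w$ lies in $W_{H^{(3)},\mathcal{C}}$ and has the same weight as $w$, the induction hypothesis applies and integrality follows. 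Two smaller remarks: the operator product expansion of $Y(\omega_S,y)Y(\omega_T,z)$ you record is correct but plays no role in the paper's argument (what matters is only the vanishing of the positive modes on the lowest-weight insertions), and your phrase ``finer bookkeeping through cross-brackets'' names the right tool without supplying the actual identity, so the proof as proposed has a genuine gap at its decisive point.
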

We first prove the following general lemma:
\begin{lemma}\label{crossbracketslemma}
Suppose $n\in\mathbb{Z}$, $\Omega=\lbrace 1,\ldots,n\rbrace$ and 
$\mathcal{C}\subseteq P(\Omega)$ is a code. If $\mathcal{Y}$ is an intertwining 
operator among $L(\frac{1}{2},0)^{\otimes n}$-modules $W_{H^{(i)}}$ for 
$i=1,2,3$, then
\begin{align}\label{crossbrackets}
 [L_T(m),\mathcal{Y}(v_{H^{(1)}},x)] & 
=x^{m}[L_T(0),\mathcal{Y}(v_{H^{(1)}},x)]+mx^{m}\mathcal{Y}(L_T(0)v_{H^{(1)}},
x)
\end{align}
for $T\in\mathcal{C}$ and $m\geq 0$.
\end{lemma}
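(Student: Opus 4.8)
The plan is to compute the commutator $[L_T(m), \mathcal{Y}(v_{H^{(1)}},x)]$ directly using the commutator formula for intertwining operators. Recall that for an intertwining operator $\mathcal{Y}$ of type $\binom{W_{H^{(3)}}}{W_{H^{(1)}}\,W_{H^{(2)}}}$ and a vector $u$ in the vertex operator algebra, one has the iterate/commutator formula expressing $[Y(u,x_1), \mathcal{Y}(w,x_2)]$ as a sum over $j \geq 0$ of terms involving $\mathcal{Y}(u_j w, x_2)$ with coefficients that are (derivatives of) delta functions or, after taking residues, powers of $x_2$. Concretely, taking $u = \omega_T$ so that $Y(\omega_T, x_1) = \sum_m L_T(m) x_1^{-m-2}$, and extracting the coefficient of $x_1^{-m-2}$, I would use the standard consequence
\begin{equation*}
 [L_T(m), \mathcal{Y}(w, x)] = \sum_{j \geq 0} \binom{m+1}{j} x^{m+1-j} \mathcal{Y}(L_T(j-1) w, x)
\end{equation*}
valid for $m \geq -1$, which follows from the Jacobi identity for intertwining operators exactly as the analogous $L(m)$-bracket formula does for modules.

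Next I would specialize to $w = v_{H^{(1)}}$. Since $v_{H^{(1)}}$ is a lowest conformal weight vector in an irreducible $L(\frac{1}{2},0)^{\otimes n}$-module, and $\omega_T$ has conformal weight $2$, we have $L_T(j-1) v_{H^{(1)}} = 0$ for $j - 1 \geq 1$, i.e. for $j \geq 2$; here I am using that $L_T(k) = \sum_{i \notin T} L^{(i)}(k) - \sum_{i \in T} L^{(i)}(k)$ and that each $L^{(i)}(k)$ annihilates $v_{H^{(1)}}$ for $k > 0$, since $v_{H^{(1)}}$ is a tensor product of lowest conformal weight vectors. Therefore only the $j = 0$ and $j = 1$ terms survive, giving
\begin{equation*}
 [L_T(m), \mathcal{Y}(v_{H^{(1)}}, x)] = x^{m+1} \mathcal{Y}(L_T(-1) v_{H^{(1)}}, x) + (m+1) x^{m} \mathcal{Y}(L_T(0) v_{H^{(1)}}, x)
\end{equation*}
for $m \geq -1$. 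It remains to rewrite the $j=0$ term. Using the $L(-1)$-derivative property of intertwining operators, $\mathcal{Y}(L_T(-1) v_{H^{(1)}}, x) = \frac{d}{dx} \mathcal{Y}(v_{H^{(1)}}, x)$, and then I would reconcile this with the right-hand side of \eqref{crossbrackets} by noting that $\frac{d}{dx}$ acting on $\mathcal{Y}(v_{H^{(1)}},x)$ can be re-expressed via the bracket with $L_T(0)$. Indeed, applying the formula once more at $m = 0$ gives $[L_T(0), \mathcal{Y}(v_{H^{(1)}},x)] = x \frac{d}{dx}\mathcal{Y}(v_{H^{(1)}},x) + \mathcal{Y}(L_T(0) v_{H^{(1)}},x)$, so $x\,\mathcal{Y}(L_T(-1)v_{H^{(1)}},x) = [L_T(0), \mathcal{Y}(v_{H^{(1)}},x)] - \mathcal{Y}(L_T(0)v_{H^{(1)}},x)$. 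Substituting this into the displayed formula above yields
\begin{equation*}
 [L_T(m), \mathcal{Y}(v_{H^{(1)}},x)] = x^m [L_T(0), \mathcal{Y}(v_{H^{(1)}},x)] - x^m \mathcal{Y}(L_T(0) v_{H^{(1)}},x) + (m+1) x^m \mathcal{Y}(L_T(0)v_{H^{(1)}},x),
\end{equation*}
which collapses to exactly \eqref{crossbrackets}, and restricting to $m \geq 0$ gives the statement as written.

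The main obstacle I anticipate is purely bookkeeping: getting the precise form of the commutator formula for intertwining operators with the correct binomial coefficients and powers of $x$, since the standard reference formulas are usually stated for $Y(u,x_1)$ acting on a module rather than for the mixed bracket $[Y(u,x_1), \mathcal{Y}(w,x_2)]$, and one must be careful that $u = \omega_T$ lives in the vertex operator algebra $L(\frac{1}{2},0)^{\otimes n}$ (not in an integral form issue here — the lemma is stated over $\mathbb{C}$) so that the Jacobi identity applies in its usual form. There is also a minor subtlety in that $L_T(-1) v_{H^{(1)}}$ need not vanish, so the $j=0$ term genuinely contributes and must be handled via the $L(-1)$-derivative property and then folded back into the $[L_T(0), \mathcal{Y}]$ term; I would present this reconciliation step carefully since it is where the specific shape of \eqref{crossbrackets} comes from. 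Everything else — the vanishing of higher $L_T(j-1)v_{H^{(1)}}$ terms and the extraction of the coefficient of $x_1^{-m-2}$ — is routine.
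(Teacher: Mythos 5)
Your argument is correct, but it reaches \eqref{crossbrackets} by a different route than the paper. You start from the full commutator formula (the residue in $x_0$ of the Jacobi identity), $[L_T(m),\mathcal{Y}(w,x)]=\sum_{j\geq 0}\binom{m+1}{j}x^{m+1-j}\mathcal{Y}(L_T(j-1)w,x)$, kill all terms with $j\geq 2$ because $v_{H^{(1)}}$ is a tensor product of lowest weight vectors, and then eliminate the surviving $L_T(-1)$ term by solving the $m=0$ instance for $x\,\mathcal{Y}(L_T(-1)v_{H^{(1)}},x)$ and substituting back; this gives \eqref{crossbrackets} in closed form, with no induction. The paper instead extracts the coefficient of $x_0^{-2}$ from the Jacobi identity (the cross-bracket, in the spirit of Section 8.9 of \cite{FLM2}), which multiplies the commutator by $(x_1-x_2)$ and so never produces an $L_T(-1)$ term at all; only $L_T(i)$ with $i\geq 0$ appear, and since $L_T(i)v_{H^{(1)}}=0$ for $i>0$ one gets the two-term recursion $[L_T(m+1),\mathcal{Y}(v_{H^{(1)}},x)]-x[L_T(m),\mathcal{Y}(v_{H^{(1)}},x)]=x^{m+1}\mathcal{Y}(L_T(0)v_{H^{(1)}},x)$, which is then iterated by induction on $m$. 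The trade-off is exactly what you anticipated: your route needs the precise binomial bookkeeping and the $j=0$ reconciliation, while the paper's cross-bracket trick sidesteps the $L_T(-1)$ term at the cost of an induction. One caveat: your parenthetical appeal to the $L(-1)$-derivative property, $\mathcal{Y}(L_T(-1)v_{H^{(1)}},x)=\frac{d}{dx}\mathcal{Y}(v_{H^{(1)}},x)$, is false for general $T$, since $L_T(-1)=\sum_{i\notin T}L^{(i)}(-1)-\sum_{i\in T}L^{(i)}(-1)$ is not the conformal vector's $L(-1)$ unless $T=\emptyset$. Fortunately you never actually use that identification: the identity you do use, $x\,\mathcal{Y}(L_T(-1)v_{H^{(1)}},x)=[L_T(0),\mathcal{Y}(v_{H^{(1)}},x)]-\mathcal{Y}(L_T(0)v_{H^{(1)}},x)$, follows from the $m=0$ case of your commutator formula alone, so simply delete the $\frac{d}{dx}$ remark and the proof stands.
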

\begin{proof}
The coefficient of $x_0^{-2}$ in the Jacobi identity for intertwining operators 
(see for example formula (5.4.4) in \cite{FHL}) implies that for 
$T\in\mathcal{C}$, 
\begin{align*}
 (x_1-x_2)[Y(\omega_T,x_1),\mathcal{Y}(  v_{H^{(1)}} & ,x_2)]  
=\mathrm{Res}_{x_0} x_0 
x_2^{-1}\delta\left(\dfrac{x_1-x_0}{x_2}\right)\mathcal{Y}(Y(\omega_T,x_0)v_{H^{
(1)}},x_2)\\
 & =\sum_{i\geq 0}\dfrac{(-1)^i}{i!}\left(\dfrac{\partial}{\partial 
x_1}\right)^i\left(x_2^{-1}\delta\left(\dfrac{x_1}{x_2}\right)\right)\mathcal{Y}
(L_T(i) v_{H^{(1)}},x_2)\\
 & = 
x_2^{-1}\delta\left(\dfrac{x_1}{x_2}\right)\mathcal{Y}(L_T(0)v_{H^{(1)}},x_2),
\end{align*}
since $L_T(i)v_{H^{(1)}}=0$ for $i>0$. Now taking the coefficient of 
$x_1^{-m-2}$ and replacing $x_2$ with $x$, we obtain
\begin{equation}\label{crossbrackets2}
 [L_T(m+1),\mathcal{Y}(v_{H^{(1)}},x)]-x [L_T(m),\mathcal{Y}(v_{H^{(1)}},x)] = 
x^{m+1}\mathcal{Y}(L_T(0)v_{H^{(1)}},x).
\end{equation}

 We now prove (\ref{crossbrackets}) by induction on $m$. It is certainly true 
for $m=0$, and assuming it is true for $m\geq 0$, we see from 
(\ref{crossbrackets2}) that
 \begin{align*}
  [& L_T(m+1)  ,\mathcal{Y}(v_{H^{(1)}},x)]  = 
x[L_T(m),\mathcal{Y}(v_{H^{(1)}},x)]+x^{m+1}\mathcal{Y}(L_T(0)v_{H^{(1)}},x)\\
  & =x(x^m[L_T(0),\mathcal{Y}(v_{H^{(1)}},x)]+m 
x^{m}\mathcal{Y}(L_T(0)v_{H^{(1)}},x))+x^{m+1}\mathcal{Y}(L_T(0)v_{H^{(1)}},
x)\\
  & = 
x^{m+1}[L_T(0),\mathcal{Y}(v_{H^{(1)}},x)]+(m+1)x^{m+1}\mathcal{Y}(L_T(0)v_{
H^{(1)}},x),
 \end{align*}
proving the lemma.
\end{proof}

\begin{rema}
 Considering the coefficient of $x_0^{-2}$ in the Jacobi identity for intertwining operators as in the proof of Lemma \ref{crossbrackets} is motivated by the cross-brackets of vertex operators discussed in \cite{FLM2} Section 8.9. If $u$ and $v$ are two vectors in a vertex operator algebra $V$, then the crossbracket of $u$ and $v$ is the collection of operators
 \begin{equation*}
  [u\times_1 v]_{m,n}=[u_{m+1}, v_n]-[u_m, v_{n+1}]
 \end{equation*}
for $m,n\in\Z$. Cross-brackets are particularly natural to consider when $u$ and $v$ have conformal weight $2$ and when $V$ has no non-zero elements of conformal weight $1$ or of weight less than $0$. Then the coefficient of $x_0^{-2}$ in the Jacobi identity for vertex operator algebras shows that
\begin{equation}\label{commaff}
[u\times_1 v]_{m,n}=(u_1 v)_{m+n}+\dfrac{m(m-1)}{2}\delta_{m+n,1} u_3 v.
\end{equation}
for $m,n\in\Z$. In this case, $(u,v)\rightarrow u_1 v$ defines a commutative product on $V_{(2)}$ and if $V_{(0)}=\mathbb{C}\mathbf{1}$, then $(u,v)\rightarrow u_3 v$ defines a symmetric bilinear form on $V_{(2)}$. Note that \eqref{commaff} is analogous to the commutation relations in an affine Lie algebra.
\end{rema}

We now proceed with the proof of Theorem \ref{virintwop}:
\begin{proof}
 Since we identify $W_{H^{(3)}}$ with $W_{H^{(3)}}'$ using 
$(\cdot,\cdot)_{H^{(3)}}$ and since $(v_{H^{(3)}},v_{H^{(3)}})_{H^{(3)}}=1$, it 
is clear that if $\mathcal{Y}$ is integral with respect to 
$W_{H^{(1)},\mathcal{C}}$, $W_{H^{(2)},\mathcal{C}}$, and 
$W_{H^{(3)},\mathcal{C}}'$, then the coefficient of 
$\mathcal{Y}(v_{H^{(1)}},x)v_{H^{(2)}}$ in the lowest weight space of 
$W_{H^{(3)}}$ must be in $\mathbb{Z}v_{H^{(3)}}$.

Conversely, suppose that the coefficient of 
$\mathcal{Y}(v_{H^{(1)}},x)v_{H^{(2)}}$ in the lowest weight space of 
$W_{H^{(3)}}$ is in $\mathbb{Z}v_{H^{(3)}}$, that is,
\begin{equation}\label{incontraassump}
 (\mathcal{Y}(v_{H^{(1)}},x)v_{H^{(2)}}, 
v_{H^{(3)}})_{H^{(3)}}\in\mathbb{Z}\lbrace x\rbrace.
\end{equation}
By Theorem \ref{intwopgen}, it is enough to prove that
\begin{equation}\label{incontra}
 (\mathcal{Y}(v_{H^{(1)}},x)v_{H^{(2)}}, w)_{H^{(3)}}\in\mathbb{Z}\lbrace 
x\rbrace
\end{equation}
for any $w\in W_{H^{(3)},\mathcal{C}}$. To prove this, we use induction on the 
conformal weight of $w$ (which is contained in $\sum_{i=1}^n 
h^{(3)}_i+\mathbb{N}$), the base case given by (\ref{incontraassump}). Since we 
see from the proofs of Propositions \ref{virmodgoodform} and \ref{modc16} that 
$W_{H^{(3)}}$ is the $\mathbb{Z}$-span of vectors of the form
\begin{equation*}
 L_{T_1}(-m_1)\cdots L_{T_k}(-m_k) v_{H^{(3)}}
\end{equation*}
where $m_i>0$, it is enough to show that if (\ref{incontra}) holds for $w\in 
W_{H^{(3)},\mathcal{C}}$ of weight less than some fixed $N$, then it also holds 
for $L_T(-m)w$ for any $T\in\mathcal{C}$ and $m>0$. 

To prove this, we first observe that since
\begin{equation*}
 Y(e^{x L(1)}(-x^{-2})^{L(0)}\omega_T,x^{-1})=x^{-4} 
Y(\omega_T,x^{-1}),
\end{equation*}
the operator $L_T(m)$ is adjoint to $L_T(-m)$ with respect to 
$(\cdot,\cdot)_{H^{(3)}}$ for any $T\in\mathcal{C}$. Then we use Lemma 
\ref{crossbracketslemma} and the fact that $L_T(m)v_{H^{(2)}}=0$ for $m>0$ to 
obtain
\begin{align*}
 (\mathcal{Y}(v_{H^{(1)}},x)&v_{H^{(2)}}, L_T(-m)w)_{H^{(3)}}  
=(L_T(m)\mathcal{Y}(v_{H^{(1)}},x)v_{H^{(2)}},w)_{H^{(3)}}\\
 & = ([L_T(m),\mathcal{Y}(v_{H^{(1)}},x)]v_{H^{(2)}},w)_{H^{(3)}}\\
 & = (x^m 
[L_T(0),\mathcal{Y}(v_{H^{(1)}},x)]v_{H^{(2)}}+mx^{m}\mathcal{Y}(L_T(0)v_{H^{
(1)}},x)v_{H^{(2)}},w)_{H^{(3)}}\\
 & 
=x^m(\mathcal{Y}(v_{H^{(1)}},x)v_{H^{(2)}},L_T(0)w)_{H^{(3)}}-x^m(\mathcal{Y}(v_
{H^{(1)}},x)L_T(0)v_{H^{(2)}},w)_{H^{(3)}}\\
 &\,\,\,\,\,\,\,+m 
x^{m}(\mathcal{Y}(L_T(0)v_{H^{(1)}},x)v_{H^{(2)}},w)_{H^{(3)}}\in\mathbb{Z}
\lbrace x\rbrace
\end{align*}
because $L_T(0)v_{H^{(i)}}\in\mathbb{Z}v_{H^{(i)}}$ for $i=1,2$ and $L_T(0)w\in 
W_{H^{(3)},\mathcal{C}}$ has the same weight as $w$. This proves the theorem.
\end{proof}

\section{Application to framed vertex operator algebras}

We recall that in \cite{DGH}, vertex operator algebras which contain a vertex 
operator subalgebra isomorphic to $L(\frac{1}{2},0)^{\otimes n}$ for some 
$n\in\mathbb{N}$ are called framed vertex operator algebras. We conclude this 
paper by suggesting a strategy for obtaining interesting integral forms in a framed 
vertex operator algebra $V$, when $n\in 4\mathbb{Z}$. From \cite{W} and 
\cite{DMZ}, the $L(\frac{1}{2},0)^{\otimes 
n}$-module $V$ is completely reducible, and has a finite decomposition 
$V=\coprod W_H$ for some $H=( h_1,\ldots,h_n)\in\lbrace 
0,\frac{1}{2},\frac{1}{16}\rbrace^n$. If $\Omega=\lbrace 1,\ldots n\rbrace $ 
and 
$\mathcal{C}\subseteq\mathcal{E}(\Omega)$ is a code satisfying the conditions 
of 
Theorem \ref{goodform}, and if for any $W_H$ appearing in the decomposition of 
$V$, the $L(\frac{1}{2},0)^{\otimes n}_\mathcal{C}$-submodule 
$W_{H,\mathcal{C}}$ of $W_H$ generated by a lowest conformal weight vector 
$v_H$ 
is an integral form of $W_H$, then $V_\mathcal{C}=\coprod W_{H,\mathcal{C}}$ is 
an integral form of $V$ as a vector space. Note that we can take $v_{(0,\ldots,0)}=\mathbf{1}$, so that $W_{(0,\ldots,0),\mathcal{C}}= L(\frac{1}{2},0)^{\otimes n}_\mathcal{C}$. The bilinear forms $(\cdot,\cdot)_H$ induce a bilinear form $(\cdot,\cdot)$ on $V$, showing that $V\cong V'$ as an $L(\frac{1}{2},0)^{\otimes n}$-module (note that $(\cdot,\cdot)$ does not necessarily induce an isomorphism of $V$-modules). We use $V_\mathcal{C}'$ to denote the graded $\Z$-dual of $V_\mathcal{C}$ with respect to the form $(\cdot,\cdot)$.

We observe that if $W_{H^{(i)}}$ for $i=1,2,3$ are three submodules appearing 
in 
the decomposition of $V$, then the vertex operator $Y$ on $V$ restricted to 
$W_{H^{(1)}}\otimes W_{H^{(2)}}$ and projected onto $W_{H^{(3)}}$ is an 
intertwining operator of type $\binom{W_{H^{(3)}}}{W_{H^{(1)}}\,W_{H^{(2)}}}$. Then Theorem \ref{virintwop} implies
\begin{theo}
 Suppose that for any triple of submodules $W_{H^{(i)}}$ for $i=1,2,3$ appearing in the decomposition of $V$ as an $L(\frac{1}{2},0)^{\otimes n}$-module, the lowest weight vectors $v_{H^{(i)}}$ generating the $W_{H^{(i)},\mathcal{C}}$ as $L(\frac{1}{2},0)^{\otimes n}_\mathcal{C}$-modules satisfy
 \begin{equation*}
  (Y(v_{H^{(1)}},x)v_{H^{(2)}},v_{H^{(3)}})\in\Z\lbrace x\rbrace.
 \end{equation*}
Then for any $u,v\in V_\mathcal{C}$,
\begin{equation}\label{framedop}
 Y(u,x)v\in V_\mathcal{C}'[[x,x^{-1}]].
\end{equation}
\end{theo}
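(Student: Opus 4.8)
The plan is to reduce the integrality of general products $Y(u,x)v$ to the integrality of the ``seed'' correlation functions $(Y(v_{H^{(1)}},x)v_{H^{(2)}}, v_{H^{(3)}})$, using the generating-set philosophy already developed in the paper. First I would observe that since $V_\mathcal{C}=\coprod_H W_{H,\mathcal{C}}$ and the vertex operator $Y$ on $V$ respects the $L(\frac{1}{2},0)^{\otimes n}$-module decomposition in the sense that $Y(\cdot,x)$ restricted to $W_{H^{(1)}}\otimes W_{H^{(2)}}$ and projected to $W_{H^{(3)}}$ is an intertwining operator $\mathcal{Y}$ of type $\binom{W_{H^{(3)}}}{W_{H^{(1)}}\,W_{H^{(2)}}}$, it suffices to show each such $\mathcal{Y}$ is integral with respect to $W_{H^{(1)},\mathcal{C}}$, $W_{H^{(2)},\mathcal{C}}$, and $W_{H^{(3)},\mathcal{C}}'$. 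Indeed, $Y(u,x)v\in V_\mathcal{C}'[[x,x^{-1}]]$ means precisely that $(Y(u,x)v,w)\in\Z[[x,x^{-1}]]$ for all $w\in V_\mathcal{C}$, and decomposing $u,v,w$ into their $W_H$-components and using bilinearity reduces this to the statement that each projected intertwining operator maps $W_{H^{(1)},\mathcal{C}}\otimes W_{H^{(2)},\mathcal{C}}$ into $(W_{H^{(3)},\mathcal{C}})'\lbrace x\rbrace$.

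Next I would invoke Theorem \ref{virintwop} directly: for each triple of submodules $W_{H^{(i)}}$ appearing in the decomposition of $V$, the corresponding $W_{H^{(i)},\mathcal{C}}$ are integral forms (this is part of the standing hypothesis implicitly carried over from the previous theorem's setup, namely that $\mathcal{C}$ is chosen so that all the relevant modules have integral forms), and Theorem \ref{virintwop} says that $\mathcal{Y}$ is integral with respect to $W_{H^{(1)},\mathcal{C}}$, $W_{H^{(2)},\mathcal{C}}$, $(W_{H^{(3)},\mathcal{C}})'$ if and only if the coefficient of $\mathcal{Y}(v_{H^{(1)}},x)v_{H^{(2)}}$ in the lowest weight space of $W_{H^{(3)}}$ lies in $\Z v_{H^{(3)}}$ (under the identification via $(\cdot,\cdot)_{H^{(3)}}$). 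But the hypothesis of the present theorem is exactly that $(Y(v_{H^{(1)}},x)v_{H^{(2)}},v_{H^{(3)}})\in\Z\lbrace x\rbrace$, which under the identification of $W_{H^{(3)}}$ with $W_{H^{(3)}}'$ via $(\cdot,\cdot)_{H^{(3)}}$ says precisely that the lowest-weight coefficient of $\mathcal{Y}(v_{H^{(1)}},x)v_{H^{(2)}}$ is an integer multiple of $v_{H^{(3)}}$. So each projected $\mathcal{Y}$ is integral as required, and reassembling over the finite decomposition gives $(Y(u,x)v,w)\in\Z[[x,x^{-1}]]$ for all $u,v,w\in V_\mathcal{C}$, i.e., \eqref{framedop}.

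One point that needs a little care is the appearance of $V_\mathcal{C}'$ (the graded $\Z$-dual) rather than $V_\mathcal{C}$ on the right-hand side of \eqref{framedop}: this is exactly the asymmetry flagged in the introduction, and it is why the conclusion is $Y(u,x)v\in V_\mathcal{C}'[[x,x^{-1}]]$ and not $V_\mathcal{C}[[x,x^{-1}]]$. The statement $Y(u,x)v\in V_\mathcal{C}'[[x,x^{-1}]]$ unwinds to $(Y(u,x)v,w)\in\Z[[x,x^{-1}]]$ for $w\in V_\mathcal{C}$, using that $V\cong V'$ as $L(\frac{1}{2},0)^{\otimes n}$-modules via the induced form $(\cdot,\cdot)=\coprod(\cdot,\cdot)_H$; I would spell out this identification and note that $V_\mathcal{C}'$ consists of those $w'\in V$ pairing integrally against $V_\mathcal{C}$, which decomposes as $\coprod_H (W_{H,\mathcal{C}})'$ because the forms $(\cdot,\cdot)_H$ on distinct summands are orthogonal. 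I would also want to note that summing finitely many integral power series in $\Z\lbrace x\rbrace$ of bounded denominators stays in $\Z[[x,x^{-1}]]$ once one fixes a homogeneous $w$; since $V$ has integral conformal weights on the pieces entering the pairing only finitely many powers of $x$ contribute to each matrix coefficient, so the series is genuinely in $\Z[[x,x^{-1}]]$ and not just $\Z\lbrace x\rbrace$.

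The main obstacle is essentially bookkeeping rather than a deep difficulty: carefully matching up the three different ways the ``lowest-weight integrality condition'' is phrased — as a condition on a coefficient lying in $\Z v_{H^{(3)}}$, as the pairing $(Y(v_{H^{(1)}},x)v_{H^{(2)}},v_{H^{(3)}})\in\Z\lbrace x\rbrace$, and as integrality of the projected intertwining operator into the graded dual — and confirming that the self-contragredience identifications (via $(\cdot,\cdot)_{H^{(i)}}$, with $v_0=\mathbf{1}$ and $(v_H,v_H)_H=1$) are compatible across all the summands simultaneously. The one genuinely substantive input, Theorem \ref{virintwop}, has already done the real work of propagating integrality from lowest-weight vectors to all of $W_{H^{(i)},\mathcal{C}}$ via the cross-bracket commutation relations of Lemma \ref{crossbracketslemma}; here I only need to assemble those module-level statements into the algebra-level statement \eqref{framedop}.
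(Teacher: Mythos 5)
Your proposal is correct and follows essentially the same route as the paper, which deduces the theorem directly from Theorem \ref{virintwop} applied to the intertwining operators obtained by restricting $Y$ to $W_{H^{(1)}}\otimes W_{H^{(2)}}$ and projecting to $W_{H^{(3)}}$, with the hypothesis supplying exactly the required lowest-weight integrality condition. Your additional bookkeeping (the orthogonal decomposition of $V_\mathcal{C}'$ into the $(W_{H,\mathcal{C}})'$ and the passage from $\Z\lbrace x\rbrace$ to $\Z[[x,x^{-1}]]$) only makes explicit what the paper leaves implicit.
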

This theorem does not quite imply that $V_\mathcal{C}$ is an integral form of $V$ because we cannot assume $V_\mathcal{C}=V_\mathcal{C}'$. It is not clear in general when $Y$ induces intertwining operators which are integral with respect to 
$W_{H^{(1)},\mathcal{C}}$, $W_{H^{(2)},\mathcal{C}}$, and 
$W_{H^{(3)},\mathcal{C}}$ for any triple of submodules $W_{H^{(i)}}$, $i=1,2,3$, 
appearing in the decomposition of $V$. We end with an example that 
illustrates the issues involved:
\begin{exam}
 Let $Q$ be the $A_1\times A_1$ root lattice: 
$Q=\mathbb{Z}\alpha_1+\mathbb{Z}\alpha_2$ where 
$\langle\alpha_i,\alpha_j\rangle=2\delta_{ij}$. Consider the lattice vertex 
operator algebra $V_Q$ (see \cite{FLM2} or \cite{LL} for the construction of 
lattice vertex operator algebras and for the notation used below). For any $\alpha\in Q$, we define
 \begin{equation*}
  \iota(e_\alpha)^\pm =\iota(e_\alpha)\pm\iota(e_{-\alpha}).
 \end{equation*}

 The proof of Theorem 6.3 in \cite{DMZ} shows that $V_Q$ contains a vertex 
operator subalgebra isomorphic to $L(\frac{1}{2},0)^{\otimes 4}$, where
 \begin{align*}
  \omega^{(1)} & 
=\dfrac{1}{16}(\alpha_1+\alpha_2)(-1)^2\mathbf{1}+\dfrac{1}{4}\iota(e_{
\alpha_1+\alpha_2})^+,\\
  \omega^{(2)} & 
=\dfrac{1}{16}(\alpha_1+\alpha_2)(-1)^2\mathbf{1}-\dfrac{1}{4}\iota(e_{
\alpha_1+\alpha_2})^+,\\
  \omega^{(3)} & 
=\dfrac{1}{16}(\alpha_1-\alpha_2)(-1)^2\mathbf{1}+\dfrac{1}{4}\iota(e_{
\alpha_1-\alpha_2})^+,\\
  \omega^{(4)} & 
=\dfrac{1}{16}(\alpha_1-\alpha_2)(-1)^2\mathbf{1}-\dfrac{1}{4}\iota(e_{
\alpha_1-\alpha_2})^+.
 \end{align*}
It is not hard to check that the decomposition of $V_Q$ into 
$L(\frac{1}{2},0)^{\otimes 4}$-modules is given by
\begin{equation*}
 V_Q=\coprod_{\substack{H=( h_1,h_2,h_3,h_4)\in\lbrace 
0,\frac{1}{2}\rbrace^{4}\\ h_1+h_2+h_3+h_{4}\in\mathbb{Z}}} W_H,
\end{equation*}
where the lowest conformal weight vectors generating the submodules in the 
decomposition can be taken to be
\begin{equation*} 
\mathbf{1},\,\,\dfrac{\iota(e_{\alpha_1})^+\pm\iota(e_{\alpha_2})^+}{2},\,\,
\dfrac{\iota(e_{\alpha_1})^-\pm\iota(e_{\alpha_2})^-}{2},\,\,\dfrac{
(\alpha_1\pm\alpha_2)(-1)}{2}\mathbf{1},\,\,\dfrac{\alpha_1(-1)^2-\alpha_2(-1)^2
}{4}\mathbf{1}.
\end{equation*}
Setting $\Omega=\lbrace 1,2,3,4\rbrace$ and $\mathcal{C}=\mathcal{E}(\Omega)$, 
we take these vectors as the vectors $v_H$ generating 
$V_{Q,\mathcal{E}(\Omega)}$ as an $L(\frac{1}{2},0)^{\otimes 
4}_{\mathcal{E}(\Omega)}$-module. It is straightforward albeit tedious to 
calculate each $(Y(v_{H^{(1)}},x)v_{H^{(2)}}, v_{H^{(3)}})$, so we can show that $Y$ for $V_Q$ satisfies 
\eqref{framedop}.

However, it is not generally the case here that 
$W_{H,\mathcal{E}(\Omega)}=W_{H,\mathcal{E}(\Omega)}'$. For example, consider 
$H=(\frac{1}{2},\frac{1}{2},0,0)$. Then the weight $2$ subspace of 
$W_H$ is the $\mathbb{Z}$-span of the vectors $L_T(-1)v_H$ for 
$T\in\mathcal{E}(\Omega)$, which is the integral span of the vectors 
$(L^{(1)}(-1)\pm L^{(2)}(-1))v_H$. But
\begin{align*}
& \left((  L^{(1)}(-1)\pm L^{(2)}(-1))v_H, (L^{(1)}(-1)\pm L^{(2)}(-1))v_H \right)_H\\
&\;\;\;\;\;\; =\left(v_H,(L^{(1)}(1)\pm L^{(2)}(1))(L^{(1)}(-1)\pm L^{(2)}(-1))v_H\right)_H\\
&\;\;\;\;\;\; =\left(v_H,2(L^{(1)}(0)+L^{(2)}(0))v_H\right)_H=2.
\end{align*}
Also,
\begin{equation*}
 \left((L^{(1)}(-1)\pm L^{(2)}(-1))v_H, (L^{(1)}(-1)\mp L^{(2)}(-1))v_H\right)_H=0,
\end{equation*}
so that the weight $2$ subspace of $W_{H,\mathcal{E}(\Omega)}'$ is the integral 
span of the two vectors $\frac{1}{2}(L^{(1)}(-1)\pm L^{(2)}(-1))v_H$, showing 
that $(W_{H,\mathcal{E}(\Omega)}')_{(2)}\supsetneq 
(W_{H,\mathcal{E}(\Omega)})_{(2)}$.
\end{exam}

\noindent{\small \sc 
Beijing International Center for Mathematical Research, Peking University, Beijing, China 100084}\\
{\em E--mail address}:
\texttt{robertmacrae@math.pku.edu.cn} \\

\end{document}